\numberwithin{theorem}{section}
\newcommand{\TheTitle}{On the construction of converging hierarchies\\ for polynomial optimization\\ based on certificates of global positivity} 
\newcommand{\TheAuthors}{ }
\title{{\TheTitle}\thanks{This work is partially funded by the DARPA Young Faculty Award, the Young Investigator Award of the AFOSR, the CAREER Award of the NSF, the Google Faculty Award, and the Sloan Fellowship.}}
\author{
  Amir Ali Ahmadi\thanks{Department of Operations Research and Financial Engineering, Princeton, NJ 08540
    (\email{a\_a\_a@princeton.edu}, \url{http://aaa.princeton.edu}).}
  \and
  Georgina Hall\thanks{Department of Operations Research and Financial Engineering, Princeton, NJ 08540 (\email{gh4@princeton.edu},
    \url{http://scholar.princeton.edu/ghall}).}
}
\def\aaa#1{{\color{black}#1}}
\begin{document}

\maketitle

\begin{abstract}
In recent years, techniques based on convex optimization and real algebra that produce converging hierarchies of lower bounds for {\gh polynomial minimization problems} have gained much popularity. At their heart, these hierarchies rely crucially on Positivstellens\"atze from the late 20th century (e.g., due to Stengle, Putinar, or Schm\"udgen) that certify positivity of a polynomial on an arbitrary {\gh closed }basic semialgebraic set. In this paper, we show that such hierarchies could in fact be designed from much more limited Positivstellens\"atze dating back to the early 20th century that only certify positivity of a polynomial globally. More precisely, we show that any inner approximation to the cone of positive homogeneous polynomials that is arbitrarily tight can be turned into a converging hierarchy {\gh of lower bounds} for {\gh general polynomial minimization problems} with compact feasible sets. This in particular leads to a semidefinite programming-based hierarchy that relies solely on Artin's solution to Hilbert's 17th problem. We also use a classical result of Poly\'a on global positivity of even forms to construct an ``optimization-free'' converging hierarchy for general {\gh polynomial minimization problems} with compact feasible sets. This hierarchy only requires polynomial multiplication and checking nonnegativity of coefficients of certain fixed polynomials. As a corollary, we obtain new linear programming and second-order cone programming-based hierarchies for {\gh polynomial minimization problems} that rely on the recently introduced concepts of dsos (diagonally dominant sum of squares) and sdsos (scaled diagonally dominant sum of squares) polynomials. {\gh We remark that the scope of this paper is theoretical at this stage \aaa{as our hierarchies---though they involve at most two sum of squares constraints or only elementary arithmetic at each level---require the use of bisection and increase the number of variables (resp. degree) of the problem by the number of inequality constraints plus three (resp. by a factor of two).
}} 
\end{abstract}

\begin{keywords}
 {\gh Positivstellens\"atze}, polynomial optimization, convex optimization
\end{keywords}

\begin{AMS}
  	14Q99, 90C05, 90C26, 90C22, 90-03
\end{AMS}

\section{Introduction}

A polynomial optimization problem (POP) is an optimization problem of the form
\begin{equation}\label{eq:POP}
\begin{aligned}
&\inf_{x \in \mathbb{R}^n} &&p(x) \\
&\text{s.t. } &&g_i(x)\geq 0, ~i=1,\ldots,m,
\end{aligned}
\end{equation}
where $p, g_i,~i=1,\ldots,m,$ are polynomial functions in $n$ variables $x\mathrel{\mathop{:}}=(x_1,\ldots,x_n)$ and with real coefficients. It is well-known that polynomial optimization is a hard problem to solve in general. For example, simply testing whether the optimal value of problem (\ref{eq:POP}) is smaller than or equal to some rational number $k$ is NP-hard already when the objective is quadratic and the constraints are linear \cite{pardalos1991quadratic}. Nevertheless, these problems remain topical due to their numerous applications throughout engineering, operations research, and applied mathematics (see, e.g., \cite{lasserre2009moments,blekherman2012semidefinite,ahmadiOR_letters}). In this paper, we are interested in obtaining lower bounds on the optimal value of problem (\ref{eq:POP}). We focus on a class of methods which construct hierarchies of tractable convex optimization problems whose optimal values are lower bounds on the optimal value of (\ref{eq:POP}), with convergence to it as the sequence progresses.  This implies that even though the original POP is nonconvex, one can obtain increasingly accurate lower bounds on its optimal value by solving convex optimization problems. One method for constructing these hierarchies of optimization problems that has gained attention in recent years relies on the use of \emph{Positivstellens\"atze} (see, e.g., \cite{Laurent_survey} for a survey). Positivstellens\"atze are algebraic identities that certify infeasibility of a set of polynomial inequalities, or equivalently\footnote{Note that the set $\{x \in \mathbb{R}^n ~|~ g_1(x) \geq 0, \ldots, g_m(x)\geq 0\}$ is empty if and only if $-g_1(x)>0$ on the set $\{x \in \mathbb{R}^n ~|~ g_2(x) \geq 0, \ldots, g_m(x) \geq 0\}$.}, positivity of a polynomial on a basic semialgebraic set. (Recall that a basic semialgebraic set is a set defined by finitely many polynomial inequalities.) These Positivstellens\"atze can be used to prove {\gh lower bounds} on POPs. Indeed, if we denote the feasible set of (\ref{eq:POP}) by $S$, the optimal value of problem (\ref{eq:POP}) is equivalent to 
\begin{equation} \label{eq:gamma.opt}
\begin{aligned}
&\sup_{\gamma} &&\gamma\\
&\text{s.t. } &&p(x)-\gamma \geq 0,~\forall x \in S.
\end{aligned}
\end{equation}
Hence if $\gamma$ is a strict {\gh lower bound} on (\ref{eq:POP}), we have that $p(x)-\gamma>0$ on $S$, a fact that can be certified using Positivstellens\"atze.  At a conceptual level, hierarchies that provide lower bounds on (\ref{eq:POP}) are constructed thus: we fix the ``size of the certificate'' at each level of the hierarchy and search for the largest $\gamma$ such that the Positivstellens\"atze at hand can certify positivity of $p(x)-\gamma$ over $S$ with a certificate of this size. As the sequence progresses, we increase the size of the certificates allowed, hence obtaining increasingly accurate lower bounds on (\ref{eq:POP}). 

Below, we present three of the better-known Positivstellens\"atze, given respectively by Stengle~\cite{stengle1974nullstellensatz}, Schm\"udgen~\cite{schmudgen1991thek}, and Putinar~\cite{putinar1993positive}. These all rely on {sum of squares} certificates. We recall that a polynomial is a \emph{sum of squares} (sos) if it can be written as a sum of squares of other polynomials. We start with Stengle's Positivstellensatz, which certifies infeasibility of a set of polynomial inequalities. It is sometimes referred to as ``the Positivstellensatz'' in related literature as it requires no assumptions, contrarily to Schm\"udgen and Putinar's theorems which can be viewed as refinements of Stengle's result under additional assumptions. {\gh This Positivstellensatz was in fact discovered by Krivine in 1964 \cite{krivine}, and rediscovered by Stengle later\footnote{We thank an anonymous referee for pointing this out to us and for providing us with the appropriate references.}; see \cite[Section 4.7]{prestel2013positive} for a more complete history of this result.} 

\begin{theorem}[Stengle's Positivstellensatz~\cite{stengle1974nullstellensatz}]\label{th:stengle}
The {\gh closed} basic semialgebraic set $$S=\{x\in \mathbb{R}^n ~|~ g_1(x)\geq 0,\ldots, g_m(x)\geq 0\}$$ is empty if and only if there exist sum of squares polynomials $s_0(x)$,$s_1(x)$,$\ldots$, $s_m(x)$, $s_{12}(x)$, $s_{13}(x)$,$\ldots$, $s_{123\ldots m}(x)$ such that
$$-1=s_0(x)+\sum_{i} s_{i}(x)g_{i}(x) +\sum_{\{i,j\}} s_{ij}(x)g_{i}(x)g_{j}(x)+\ldots+s_{123\ldots m}(x)g_{1}(x)\ldots g_{m}(x).$$
\end{theorem}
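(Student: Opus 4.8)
The plan is to prove the two implications separately; the forward direction (``identity $\Rightarrow$ $S$ empty'') is a one-line substitution, while the converse carries all the content. For the easy direction, suppose such sum of squares polynomials exist and that some $x^*$ lay in $S$; then every $g_i(x^*)$, every product $g_i(x^*)g_j(x^*)\cdots$, and every $s_\sigma(x^*)$ is nonnegative, so the right-hand side of the identity evaluated at $x^*$ is $\geq 0$, contradicting that the left-hand side equals $-1$. Hence $S=\emptyset$. For the converse, introduce the \emph{preordering} generated by the $g_i$,
\[
T \mathrel{\mathop{:}}= \Big\{ \textstyle\sum_{\sigma\subseteq\{1,\ldots,m\}} s_\sigma \prod_{i\in\sigma} g_i ~:~ s_\sigma \text{ sum of squares}\Big\} \subseteq \mathbb{R}[x],
\]
which is closed under addition and multiplication and contains every square; the desired identity is exactly the assertion $-1\in T$. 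So I would argue by contradiction, assuming $-1\notin T$, and deduce that $S\neq\emptyset$.

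Since $-1\notin T$, the preordering $T$ is proper, and because the union of a chain of proper preorderings is again a proper preordering, Zorn's lemma furnishes a preordering $P\supseteq T$ that is maximal subject to $-1\notin P$. A standard structure lemma in real algebra shows such a maximal $P$ is an \emph{ordering} of the ring $\mathbb{R}[x]$: one has $P\cup(-P)=\mathbb{R}[x]$, and the support $\mathfrak{p}\mathrel{\mathop{:}}=P\cap(-P)$ is a prime ideal (proper, since $-1\notin P$, hence meeting $\mathbb{R}$ only in $\{0\}$). Passing to the integral domain $A\mathrel{\mathop{:}}=\mathbb{R}[x]/\mathfrak{p}$ and then to its field of fractions $F$, the image of $P$ endows $F$ with a field ordering extending the usual order of $\mathbb{R}\hookrightarrow F$. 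Let $R$ be a real closure of $(F,\leq)$; then $R$ is a real closed field containing $\mathbb{R}$ as an ordered subfield, and the tautological map $\mathbb{R}[x]\to A\hookrightarrow F\hookrightarrow R$ sends $x=(x_1,\ldots,x_n)$ to a point $\xi\in R^n$. Since each $g_i$ lies in $T\subseteq P$, its image is nonnegative in $F$, i.e. $g_i(\xi)\geq 0$ in $R$ for $i=1,\ldots,m$.

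Thus the first-order sentence $\exists x_1\cdots\exists x_n\ \bigwedge_{i=1}^m g_i(x)\geq 0$, in the language of ordered fields with coefficients in $\mathbb{R}$, holds in the real closed field $R\supseteq\mathbb{R}$. By the Tarski--Seidenberg transfer principle (equivalently, model completeness of the theory of real closed fields, or the Artin--Lang homomorphism theorem), the same sentence holds in $\mathbb{R}$ itself, producing $x^*\in\mathbb{R}^n$ with $g_i(x^*)\geq 0$ for all $i$, that is, $x^*\in S$ --- contradicting $S=\emptyset$. Hence $-1\in T$, which unwinds to the asserted representation.

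The main obstacle is precisely the passage from the algebraic hypothesis $-1\notin T$ to a genuine point of $\mathbb{R}^n$: this is where the real-algebraic machinery is unavoidable --- the structure theorem identifying maximal proper preorderings with prime-supported orderings, the construction of the real closure of an ordered field, and, the true crux, the Tarski--Seidenberg transfer principle that converts the ``abstract'' solution $\xi$ living in an extension field into an honest solution in $\mathbb{R}^n$. Everything else (the forward direction, the closure properties of $T$, and reassembling the explicit certificate once $-1\in T$ is known) is routine. In practice one simply invokes this classical theorem; I include the sketch only to indicate where its strength is concentrated.
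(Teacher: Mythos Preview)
The paper does not prove this theorem; it is stated as a classical result with a citation to Stengle~\cite{stengle1974nullstellensatz} (and a historical note crediting Krivine), so there is no in-paper proof to compare against. Your sketch is the standard route to the result via real algebra: define the preordering $T$ generated by the $g_i$, use Zorn's lemma to enlarge to a maximal proper preordering, invoke the structure lemma that such a $P$ satisfies $P\cup(-P)=\mathbb{R}[x]$ with prime support, pass to a real closure of the ordered fraction field, and then transfer the resulting ``abstract'' solution back to $\mathbb{R}^n$ via Tarski--Seidenberg (equivalently Artin--Lang). The easy direction and the overall architecture are correct.

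Two small comments on the sketch itself. First, the step ``a maximal proper preordering is an ordering'' is doing real work and deserves at least a one-line justification: the key point is that for any $a$, at least one of the extensions $P+aP$ and $P-aP$ remains proper (otherwise combining $-1=p_1+ap_2$ and $-1=q_1-aq_2$ yields $-1=p_1+q_1+p_1q_1+a^2p_2q_2\in P$), so maximality forces $a\in P$ or $-a\in P$. Second, you might mention that the statement being proved here is really the Krivine--Stengle \emph{weak} Positivstellensatz (infeasibility certificate); the paper's phrasing is exactly this version, so your identification $-1\in T$ with the asserted identity is on target.
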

The next two theorems, due to Schm\"udgen and Putinar, certify positivity of a polynomial $p$ over a {\gh closed} basic semialgebraic set $S$. They impose additional compactness assumptions comparatively to Stengle's Positivstellensatz.
\begin{theorem}[Schm\"udgen's Positivstellensatz \cite{schmudgen1991thek}]\label{th:schmudgen}
	Assume that the set $$S=\{x \in \mathbb{R}^n ~|~ g_1(x)\geq 0, \ldots, g_m(x)\geq 0\}$$ is compact. If a polynomial $p$ is positive on $S$, then $$p(x)=s_0(x)+\sum_{i} s_{i}(x)g_{i}(x) +\sum_{\{i,j\}} s_{ij}(x)g_{i}(x)g_{j}(x)+\ldots+s_{123\ldots m}(x)g_{1}(x)\ldots g_{m}(x),$$ where $s_0(x)$,$s_1(x)$,$\ldots$, $s_m(x)$, $s_{12}(x)$, $s_{13}(x)$,$\ldots$, $s_{123\ldots m}(x)$ are sums of squares.
	\end{theorem}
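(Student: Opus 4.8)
This theorem is classical and will be invoked rather than reproved in what follows; still, since Stengle's Positivstellensatz (Theorem~\ref{th:stengle}) has just been recorded, it is natural to outline how Schm\"udgen's refinement is deduced from it --- this derivation is well known (see, e.g., \cite{prestel2013positive} and the references therein). Write $\Sigma$ for the cone of sums of squares in $\mathbb{R}[x]$ and let
\[
T \mathrel{\mathop{:}}= \Big\{\,\textstyle\sum_{e\in\{0,1\}^m}\sigma_e\, g_1^{e_1}\cdots g_m^{e_m}\;:\;\sigma_e\in\Sigma\,\Big\}
\]
be the \emph{preordering} generated by $g_1,\dots,g_m$; the set $\{x\in\mathbb{R}^n:f(x)\ge 0\ \forall f\in T\}$ is exactly $S$, and the assertion is that $p>0$ on $S$ implies $p\in T$. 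The plan is to split this into two parts: \textbf{(A)} when $S$ is compact the preordering $T$ is \emph{Archimedean}, i.e.\ $N-\sum_i x_i^2\in T$ for some integer $N$; and \textbf{(B)} for \emph{any} Archimedean preordering $T$, strict positivity of $p$ on $S$ forces $p\in T$. Granting (A) and (B), the theorem is immediate.

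In (A) the only use of compactness is boundedness of $S$: I would pick $N$ with $\|x\|^2<N$ on $S$, so the basic semialgebraic set cut out by $g_1,\dots,g_m,\ \|x\|^2-N$ is empty. Applying Theorem~\ref{th:stengle} and grouping the terms of the resulting identity according to whether the factor $\|x\|^2-N$ is present or absent gives
\[
B\,(N-\|x\|^2)\;=\;1+A,\qquad A,B\in T.
\]
I expect the crux of the whole proof to be the passage from this \emph{denominator} representation to an honest membership $N'-\|x\|^2\in T$ for some (possibly larger) $N'$: this is an elementary but delicate argument bookkeeping degrees and leading forms in the identity above (W\"ormann's lemma; see \cite{prestel2013positive}). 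Once $N'-\|x\|^2\in T$ one gets $N'-x_i^2\in T$ for each $i$ (add $\sum_{j\ne i}x_j^2\in\Sigma$), then $N''\pm x_i=(N'-x_i^2)+\big(x_i\mp\frac{1}{2}\big)^2+\big(N''-N'-\frac{1}{4}\big)\in T$ for $N''$ large; and since the set of $q\in\mathbb{R}[x]$ admitting $\lambda\pm q\in T$ for some $\lambda>0$ is a subring containing $\mathbb{R}$ and every $x_i$ (closure under products uses that $T$ is closed under products), it is all of $\mathbb{R}[x]$ --- hence $T$ is Archimedean.

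For (B) I would argue as follows. Assume $p>0$ on $S$; by compactness there is $\varepsilon>0$ with $p\ge\varepsilon$ on $S$. Because $T$ is Archimedean, every ring homomorphism $\varphi\colon\mathbb{R}[x]\to\mathbb{R}$ with $\varphi(T)\subseteq[0,\infty)$ is evaluation at a point $x_\varphi\in\mathbb{R}^n$ with $\sum_i x_{\varphi,i}^2\le N$ (apply $\varphi$ to $N-\sum_i x_i^2\in T$), and $g_i(x_\varphi)=\varphi(g_i)\ge 0$ shows $x_\varphi\in S$; conversely each point of $S$ yields such a $\varphi$. Thus the space of these homomorphisms, with the topology of pointwise evaluation, is homeomorphic to the compact set $S$, and $\varphi\mapsto\varphi(p)$ is a continuous function on it bounded below by $\varepsilon>0$. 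The Kadison--Dubois representation theorem for Archimedean preorderings then gives $p\in T$, i.e.\ the desired sum-of-squares certificate. In summary, essentially all the difficulty lives in (A) --- more precisely in clearing the denominator $B$ --- while (B) is ``soft'' given Theorem~\ref{th:stengle} and the classical representation machinery, and compactness enters only twice: to bound $S$ in (A) and to bound $p$ away from zero in (B).
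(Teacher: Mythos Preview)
The paper does not prove Theorem~\ref{th:schmudgen}: it is stated as a classical background result with a citation to \cite{schmudgen1991thek} and is not used later except as motivation for the types of hierarchies under study. Your opening sentence anticipates this correctly.

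Your sketch of the standard derivation from Stengle's Positivstellensatz is accurate and follows the usual route (Stengle $\Rightarrow$ a denominator identity $B(N-\|x\|^2)=1+A$ with $A,B\in T$; W\"ormann's lemma to clear the denominator and obtain the Archimedean property; then Kadison--Dubois to conclude). The identification of W\"ormann's lemma as the genuine crux is on point, and the bookkeeping you describe for showing that $\{q:\lambda\pm q\in T\text{ for some }\lambda\}$ is a subring containing all $x_i$ is the standard argument. One minor remark: in part~(B) you need not invoke compactness of $S$ separately to get $p\ge\varepsilon$, since the Archimedean property already forces $S$ to be compact; but this is a harmless redundancy, not a gap. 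In short, nothing is wrong with your outline, and it goes well beyond what the paper itself supplies.
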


\begin{theorem}[Putinar's Positivstellensatz~\cite{putinar1993positive}]\label{th:putinar}
	Let $$S=\{x \in \mathbb{R}^n ~|~ g_1(x)\geq 0, \ldots, g_m(x)\geq 0\}$$ and assume that $\{g_1,\ldots,g_m\}$ satisfy the Archimedean property, i.e., there exists $N \in~\mathbb{N}$ such that $$N-\sum_i x_i^2=\sigma_0(x)+\sigma_1(x) g_1(x)+\ldots+\sigma_m(x) g_m(x),$$ where $\sigma_1(x),\ldots,\sigma_m(x)$ are sums of squares. If a polynomial $p$ is positive on $S$, then $$p(x)=s_0(x)+s_1(x)g_1(x)+\ldots+s_m(x)g_m(x),$$ where $s_1(x),\ldots,s_m(x)$ are sums of squares.
\end{theorem}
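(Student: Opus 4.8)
The plan is to prove the (equivalent, stronger-looking) dual statement that $p$ lies in the \emph{quadratic module}
\[
M \mathrel{\mathop{:}}= M(g_1,\dots,g_m) = \Bigl\{\, \sigma_0 + \sum_{i=1}^m \sigma_i\, g_i \ :\ \sigma_i \text{ sums of squares} \,\Bigr\},
\]
which is exactly the asserted representation. The first step is to upgrade the Archimedean hypothesis $N-\sum_i x_i^2 \in M$ to the statement that $1$ is an algebraic interior point of the convex cone $M$: given any $f\in\mathbb{R}[x]$, a short induction on the degree of $f$ (using $4ab=(a+b)^2-(a-b)^2$ together with $N-\sum_i x_i^2\in M$) produces $\lambda>0$ with $\lambda\pm f\in M$, and then $1+tf = t\bigl((\tfrac1t-\lambda)\cdot 1 + (\lambda+f)\bigr)\in M$ for all $0<t\le 1/\lambda$. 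I would also record, for every $f\in\mathbb{R}[x]$, the inequality $Nf^2 - x_i^2 f^2 = (N-\sum_j x_j^2)f^2 + \sum_{j\ne i}x_j^2 f^2 \in M$, which will be used to bound operators below.

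Next, argue by contradiction: suppose $p\notin M$. Since $M$ is convex with nonempty algebraic interior and $\{p\}$ is disjoint from it, the geometric Hahn--Banach (supporting-hyperplane) theorem yields a nonzero linear functional $L:\mathbb{R}[x]\to\mathbb{R}$ with $L\ge 0$ on $M$ and $L(p)\le 0$; because $M$ is a cone and $1\in\operatorname{core}(M)$ one checks $L(1)>0$, so we may normalize $L(1)=1$. The heart of the argument is to realize $L$ as integration against a probability measure supported on $S$. Since $f^2\in M$ for all $f$, the bilinear form $\langle f,h\rangle\mathrel{\mathop{:}}=L(fh)$ is positive semidefinite; quotient $\mathbb{R}[x]$ by its kernel and complete to a Hilbert space $H$. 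The multiplication maps $\hat x_i:\bar f\mapsto \overline{x_i f}$ descend to $H$ (by $L(x_i^2f^2)\le N\,L(f^2)$ they kill the kernel) and are pairwise commuting, symmetric, and bounded with $\|\hat x_i\|\le\sqrt N$ — this is precisely where the Archimedean hypothesis enters. The spectral theorem for a commuting tuple of bounded self-adjoint operators then produces a joint spectral measure on a compact subset of $\mathbb{R}^n$; pairing it with the cyclic vector $\bar 1$ gives a positive Borel measure $\mu$ with compact support, $\mu(\mathbb{R}^n)=L(1)=1$, and $L(f)=\int f\,d\mu$ for every polynomial $f$.

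To finish, I would show $\operatorname{supp}\mu\subseteq S$: for each $i$ and every polynomial $h$ we have $g_i h^2\in M$, hence $\int g_i h^2\,d\mu = L(g_i h^2)\ge 0$; approximating indicator functions of small balls by squares of polynomials (Stone--Weierstrass on the compact $\operatorname{supp}\mu$) forces $g_i\ge 0$ $\mu$-a.e., so $\operatorname{supp}\mu\subseteq\{x:g_i(x)\ge 0\}$, and intersecting over $i$ gives $\operatorname{supp}\mu\subseteq S$. Then $L(p)=\int_{\operatorname{supp}\mu} p\,d\mu \ge \bigl(\min_{\operatorname{supp}\mu} p\bigr)\mu(\mathbb{R}^n) > 0$, because $p$ is positive on the compact nonempty set $\operatorname{supp}\mu\subseteq S$ — contradicting $L(p)\le 0$. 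Hence $p\in M$, i.e.\ $p=s_0+\sum_i s_i g_i$ with the $s_i$ sums of squares.

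The step I expect to be the main obstacle is the passage from the separating functional $L$ to the representing measure $\mu$ (the GNS/spectral-theorem construction together with the Stone--Weierstrass argument pinning the support inside $S$): this is where compactness via the Archimedean hypothesis is genuinely indispensable, since without it $\hat x_i$ need not be bounded and no compactly supported representing measure need exist; the separation and normalization steps are routine once $1\in\operatorname{core}(M)$ is in hand. I would also remark on an alternative, purely algebraic route that avoids functional analysis: apply Stengle's Positivstellensatz (Theorem~\ref{th:stengle}) to get a representation of $p-\varepsilon$ with sum-of-squares denominators valid on $S$, then use the Archimedean identity to clear denominators at the expense of enlarging the multipliers; this is essentially Schweighofer's proof and, while elementary, is markedly more involved arithmetically.
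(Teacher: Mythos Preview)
The paper does not give its own proof of Theorem~\ref{th:putinar}; it is quoted as background with a citation to Putinar~\cite{putinar1993positive}, so there is no in-paper argument to compare against. Your sketch is essentially the classical functional-analytic proof (separation via the algebraic interior of the quadratic module, GNS construction, bounded commuting multiplication operators from the Archimedean hypothesis, joint spectral measure, and a Stone--Weierstrass localization of the support), and the outline is sound as a proof of the cited result.
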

Note that these three Positivstellens\"atze involve in their expressions sum of squares polynomials of unspecified degree. To construct hierarchies of tractable optimization problems for (\ref{eq:gamma.opt}), we fix this degree: at level $r$, we search for the largest $\gamma$ such that positivity of $p(x)-\gamma$ over $S$ can be certified using the Positivstellens\"atze where the degrees of all sos polynomials are taken to be less than or equal to $2r$. Solving each level of these hierarchies is then a semidefinite program (SDP). This is a consequence of the fact that one can optimize over (or test membership to) the set of sum of squares polynomials of fixed degree using semidefinite programming \cite{sdprelax,PabloPhD,lasserre_moment}. Indeed, a polynomial $p$ of degree $2d$ and in $n$ variables is a sum of squares if and only if there exists a symmetric matrix $Q\succeq 0$ such that $p(x)=z(x)^TQz(x)$, where $z(x)=(1,x_1,\ldots,x_n,\ldots,x_n^d)^T$ is the standard vector of monomials in $n$ variables and of degree less than or equal to $d$. We remark that the hierarchy obtained from Stengle's Positivstellensatz was proposed and analyzed by Parrilo in \cite{sdprelax}; the hierarchy obtained from Putinar's Positivstellensatz was proposed and analyzed by Lasserre in~\cite{lasserre_moment}. There have been more recent works that provide constructive proofs of Schm\"udgen and Putinar's Positivstellens\"atze; see \cite{averkov, schweighofer2002algorithmic, schweighofer2005optimization}. These proofs rely on other Positivstellens\"atze, e.g., a result by Poly\'a (see Theorem \ref{th:polya} below) in \cite{schweighofer2002algorithmic, schweighofer2005optimization}, and the same result by Poly\'a, Farkas' lemma, and Stengle's Positivstellensatz in \cite{averkov}. {\gh We would like to thank an anonymous referee for pointing out that the construction in \cite{schweighofer2002algorithmic} can be used to develop converging hierarchies of lower bounds for POPs with compact feasible sets. These hierarchies rely on Gr\"obner bases computations and linear programs involving only two variables. Some experiments with this technique were carried out by Datta~\cite{datta} \aaa{and Averkov has more recently shown~\cite{averkov} that the (potentially expensive) Gr\"obner bases computations can be avoided in this approach}. Other recent research efforts relating to Positivstellens\"atze have been focused around deriving complexity bounds for Schm\"udgen and Putinar's Positivstellens\"atze; see \cite{nie2007Putinarcomplexity,schweighofer2004complexity}.}


On a historical note, Stengle, Schm\"udgen, and Putinar's Positivstellens\"atze were derived in the latter half of the 20th century. As mentioned previously, they all certify positivity of a polynomial over an arbitrary basic semialgebraic set (modulo compactness assumptions). By contrast, there are Positivstellens\"atze from the early 20th century that certify positivity of a polynomial \emph{globally.} Perhaps the most well-known Positivstellensatz of this type is due to Artin in 1927, in response to Hilbert's 17th problem. Artin shows that any nonnegative polynomial is a sum of squares of rational functions. Here is an equivalent formulation of this statement:

\begin{theorem}[Artin \cite{artin1927}]\label{th:artin}
For any nonnegative polynomial $p$, there exists {\gh a nonzero sos polynomial $q$} such that $p\cdot q$ is a sum of squares.
\end{theorem}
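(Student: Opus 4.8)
The plan is to recognize this statement as a reformulation of the classical solution to Hilbert's 17th problem: every globally nonnegative polynomial $p$ in $n$ variables can be written as $p=\sum_{i=1}^k (a_i/b_i)^2$ for polynomials $a_i,b_i$ with $b_i\not\equiv 0$. Granting this, one simply clears denominators. Put $b=\prod_{i=1}^k b_i$ and take $q=b^2$; this $q$ is a nonzero sos polynomial, being a single square, and
\[
p\cdot q \;=\; b^2\sum_{i=1}^k\left(\frac{a_i}{b_i}\right)^2 \;=\; \sum_{i=1}^k\left(a_i\prod_{j\neq i}b_j\right)^2
\]
is visibly a sum of squares of polynomials. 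Conversely, if $p\cdot q$ is sos for some nonzero sos $q$, then $p=(p\cdot q)\cdot q/q^2$ is a ratio of two sos polynomials and hence a sum of squares of rational functions, so the two formulations are equivalent. Alternatively, the statement drops out of the positivity form of Stengle's Positivstellensatz applied to $S=\mathbb{R}^n$, which yields sos polynomials $\sigma_1,\sigma_2$ and an integer $m\geq 0$ with $p\,\sigma_1=p^{2m}+\sigma_2$; since $p^{2m}=(p^m)^2$ is a square and $\sigma_1\not\equiv 0$ whenever $p\not\equiv 0$, this is already of the desired form with $q=\sigma_1$.

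For a self-contained argument I would reproduce Artin's original proof through the Artin--Schreier theory of ordered fields. Suppose $p\not\equiv 0$ is nonnegative on $\mathbb{R}^n$ but is \emph{not} a sum of squares in the field $K=\mathbb{R}(x_1,\ldots,x_n)$. The sums of squares of $K$ form a preordering that does not contain $p$, so by the Artin--Schreier extension lemma there is an ordering $P$ of $K$, making it an ordered field, in which $p$ is negative. Passing to the real closure $R$ of $(K,P)$ and viewing the coordinate functions $x_1,\ldots,x_n$ as elements of $R$, we obtain a point of $R^n$ at which $p$ is negative; equivalently, the existential sentence $\exists y\,p(y)<0$ holds in $R$. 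Because the first-order theory of real closed fields is complete, this sentence also holds in $\mathbb{R}$ (the Tarski--Seidenberg transfer principle; the Artin--Lang homomorphism theorem packages the same step), contradicting the nonnegativity of $p$ on $\mathbb{R}^n$.

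The only genuine difficulty is in the second paragraph: it requires importing the nontrivial structure theory of formally real and real closed fields---existence of an ordering refining a given preordering, and existence and properties of real closures---together with the transfer mechanism that pushes a negative value of $p$ from an abstract real closed extension of $\mathbb{R}$ back down to $\mathbb{R}$ itself. Everything else, namely the clearing-of-denominators passage and the converse direction, is purely formal.
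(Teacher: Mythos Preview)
The paper does not prove this theorem; it is stated as a classical result and attributed to Artin with a citation. Your proposal is therefore not being compared against anything in the paper, but it is a correct account of the standard proof. The clearing-of-denominators reduction to Hilbert's 17th problem is exactly the intended equivalence alluded to in the paper (``Here is an equivalent formulation of this statement''), and your Artin--Schreier sketch is the classical route. The alternative via Stengle's Positivstellensatz is also valid, though it is worth noting that this is somewhat circular in spirit since Stengle's result historically builds on the same real-algebraic machinery; still, as a formal derivation it is fine.
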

To the best of our knowledge, in this area, all converging hierarchies of lower bounds for POPs are based off of Positivstellens\"atze that certify nonnegativity of a polynomial over an arbitrary basic semialgebraic set. In this paper, we show that in fact, under compactness assumptions, it suffices to have only global certificates of nonnegativity (such as the one given by Artin) to produce a converging hierarchy for general POPs. As a matter of fact, even weaker statements that apply only to globally positive (as opposed to globally nonnegative) forms are enough to derive converging hierarchies for POPs. Examples of such statements are due to Habicht \cite{habicht1939zerlegung} and Reznick \cite{Reznick_Unif_denominator}. With such an additional positivity assumption, more can usually be said about the structure of the polynomial $q$ in Artin's result. Below, we present the result by Reznick.

\begin{theorem}[Reznick \cite{Reznick_Unif_denominator}]\label{th:reznick.uniform}
For any positive definite form $p$, there exists $r \in \mathbb{N}$ such that $p(x) \cdot (\sum_i x_i^2)^r$ is a sum of squares.	
\end{theorem}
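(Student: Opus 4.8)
I would set $e=N+d$ and aim to produce, for $e$ large, a representation $\|x\|^{2N}p(x)=\int_{S^{n-1}}\langle u,x\rangle^{2e}\,d\mu(u)$ in which $\mu$ is a \emph{nonnegative} measure on the unit sphere $S^{n-1}$. Such a representation finishes the proof at once: approximating the integral by Riemann sums $\sum_j c_j\langle u_j,x\rangle^{2e}$ with $c_j\ge 0$ and using that the cone of sums of squares is closed, the right-hand side is a sum of squares, each term being $\big(\sqrt{c_j}\,\langle u_j,x\rangle^{e}\big)^2$; a Carath\'eodory argument in the finite-dimensional space of forms of degree $2e$ even makes the representing sum finite.

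To build $\mu$, I would look for it with a polynomial density, $d\mu=\tilde\mu(u)\,d\sigma(u)$, where $\sigma$ is the uniform measure on $S^{n-1}$. Since both $\int_{S^{n-1}}\langle u,x\rangle^{2e}\tilde\mu(u)\,d\sigma(u)$ and $\|x\|^{2N}p(x)$ are forms of degree $2e$, agreeing on $S^{n-1}$ forces them equal on $\mathbb{R}^n$; so it suffices to find $\tilde\mu$ with $(T_e\tilde\mu)(x)=p(x)$ for all $x\in S^{n-1}$, where $T_e$ is the self-adjoint operator on $L^2(S^{n-1})$ given by $(T_e g)(x)=\int_{S^{n-1}}\langle u,x\rangle^{2e}g(u)\,d\sigma(u)$. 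The operator $T_e$ is \emph{zonal}, so by the Funk--Hecke formula it is diagonalized by the decomposition into spherical harmonics: $T_eY_k=\lambda_{e,k}Y_k$ for every spherical harmonic $Y_k$ of degree $k$, with
\[
\lambda_{e,k}=\omega_{n-2}\int_{-1}^{1}t^{2e}\,\frac{C_k^{\nu}(t)}{C_k^{\nu}(1)}\,(1-t^2)^{(n-3)/2}\,dt,\qquad \nu=\frac{n-2}{2},
\]
$C_k^{\nu}$ a Gegenbauer polynomial. Here $\lambda_{e,k}=0$ for odd $k$, $\lambda_{e,k}>0$ for even $k$, and $\lambda_{e,0}=\omega_{n-2}\,B\!\big(e+\frac12,\frac{n-1}{2}\big)>0$.

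Next, decompose $p|_{S^{n-1}}=\sum_{k=0}^{d}p_{2k}$ into its spherical-harmonic components (only even degrees $\le 2d$ occur, as $p$ is a form of degree $2d$) and set $\tilde\mu:=\sum_{k=0}^{d}\lambda_{e,2k}^{-1}\,p_{2k}$, a polynomial on the sphere, so that $T_e\tilde\mu=p$ on $S^{n-1}$ by construction. It remains to arrange $\tilde\mu\ge 0$, and this is exactly where positive definiteness is used: $p(x)\ge\lambda\|x\|^{2d}$ for some $\lambda>0$, hence $p\ge\lambda$ on $S^{n-1}$. Since $\lambda_{e,0}\tilde\mu=\sum_{k=0}^{d}\frac{\lambda_{e,0}}{\lambda_{e,2k}}\,p_{2k}$ is a \emph{finite} sum of fixed continuous functions, the plan is to show $\lambda_{e,0}\tilde\mu\to p$ uniformly on $S^{n-1}$ as $e\to\infty$; this then forces $\lambda_{e,0}\tilde\mu\ge\lambda/2>0$ on $S^{n-1}$ for $e$ large, hence $\tilde\mu\ge\lambda/(2\lambda_{e,0})>0$. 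The convergence reduces to the single estimate $\lambda_{e,2k}/\lambda_{e,0}\to 1$ as $e\to\infty$ for each fixed $k\in\{0,\ldots,d\}$.

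The main obstacle is precisely this spectral estimate, i.e.\ the asymptotic analysis of the Funk--Hecke integrals. The mechanism is that the probability measure proportional to $t^{2e}(1-t^2)^{(n-3)/2}\,dt$ on $[-1,1]$ concentrates at the endpoints $t=\pm1$ as $e\to\infty$, while $C_{2k}^{\nu}(\pm1)/C_{2k}^{\nu}(1)=1$; making this quantitative (by Laplace's method, or by evaluating the integrals in closed form as ratios of Gamma functions and expanding) should give $\lambda_{e,2k}/\lambda_{e,0}=1-O_k(1/e)$, which is far more than needed. Everything else---the reduction to an integral of $2e$-th powers of linear forms, the passage from the sphere to forms by homogeneity, and closedness of the sum-of-squares cone---is routine, and the $O_k(1/e)$ bound yields an explicit sufficient $N$ in terms of $n$, $d$, and $\max_{S^{n-1}}p\,/\min_{S^{n-1}}p$.
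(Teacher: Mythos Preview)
The paper does not supply its own proof of this theorem: it is quoted as a known result of Reznick, with a citation to \cite{Reznick_Unif_denominator}, and is then used as a black box to verify property~(b) for the Reznick cones in Theorem~\ref{th:reznick.hierarchy}. So there is no in-paper argument to compare against.

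That said, your proposal is essentially a correct outline of Reznick's original proof in the cited reference. The strategy---represent $\|x\|^{2N}p(x)$ as $\int_{S^{n-1}}\langle u,x\rangle^{2e}\,d\mu(u)$ with $\mu\ge 0$, build $\mu$ with polynomial density by inverting the zonal operator $T_e$ via the Funk--Hecke formula on spherical harmonics, and then use positive definiteness of $p$ together with the spectral asymptotics $\lambda_{e,2k}/\lambda_{e,0}\to 1$ to force the density nonnegative for large $e$---is exactly Reznick's. Your identification of the main technical point (the eigenvalue ratio estimate, which Reznick carries out via explicit Gamma-function evaluations yielding a quantitative bound on the required exponent in terms of $n$, $d$, and $\max_{S^{n-1}}p/\min_{S^{n-1}}p$) is accurate. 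One small remark: the passage from the integral representation to a finite sum of squares does not require an approximation-plus-closure argument; since the integrand $\langle u,x\rangle^{2e}$ already lies in the finite-dimensional cone of sos forms of degree $2e$ for every $u$, the integral (a limit of convex combinations) lies in its closure, and the sos cone is closed---or, more directly, one can apply Carath\'eodory as you note.
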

We show in this paper that this Positivstellensatz also gives rise to a converging hierarchy for POPs with a compact feasible set similarly to the one generated by Artin's Positivstellensatz.

Through their connections to sums of squares, the two hierarchies obtained using the theorems of Reznick and Artin are semidefinite programming-based. In this paper, we also derive an ``optimization-free'' converging hierarchy for POPs with compact feasible sets where each level of the hierarchy only requires that we be able to test nonnegativity of the coefficients of a given fixed polynomial. To the best of our knowledge, this is the first converging hierarchy of lower bounds for POPs which does not require that convex optimization problems be solved at each of its levels. To construct this hierarchy, we use a result of Poly\'a \cite{Polya}, which just like Artin's and Reznick's Positivstellens\"atze, certifies global positivity of forms. However this result is restricted to even forms. Recall that a form $p$ is \emph{even} if each of the variables featuring in its individual monomials has an even power. This is equivalent (see \cite[Lemma 2]{de2005equivalence}) to $p$ being invariant under change of sign of each of its coordinates, i.e., \begin{equation*}
p(x_1,\ldots,x_n)=p(-x_1,\ldots,x_n)=\cdots=p(x_1,\ldots,-x_n).
\end{equation*}
\begin{theorem}[Poly\'a \cite{Polya}]\label{th:polya}
For any positive definite even form $p$, there exists $r \in \mathbb{N}$ such that $p(x) \cdot (\sum_i x_i^2)^r$ has nonnegative coefficients.
\end{theorem}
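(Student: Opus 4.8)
The plan is to reduce this statement to the classical form of Polya's theorem on the simplex, namely: if a form $q(y_1,\dots,y_n)$ is positive on the standard simplex $\Delta=\{y\in\mathbb{R}^n: y_i\geq 0,\ \sum_i y_i=1\}$, then for some $r\in\mathbb{N}$ the form $(\sum_i y_i)^r q(y)$ has all nonnegative coefficients (this is exactly the content of \cite{Polya}). The key observation is that evenness lets us pass to the ``square variables'' $y_i=x_i^2$, and that positive definiteness of $p$ translates precisely into positivity of the resulting form on the nonnegative orthant, hence on $\Delta$.

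First I would record the elementary fact that any even form $p$ of degree $2d$ can be written as $p(x_1,\dots,x_n)=q(x_1^2,\dots,x_n^2)$ for a form $q$ of degree $d$: each monomial of $p$ has the shape $x_1^{2\beta_1}\cdots x_n^{2\beta_n}$ with $\sum_i\beta_i=d$, so it equals $(x_1^2)^{\beta_1}\cdots(x_n^2)^{\beta_n}$, and collecting these terms defines $q$. Moreover $p$ and $q$ carry literally the same list of coefficients under the monomial correspondence $y^\beta\leftrightarrow x^{2\beta}$, which is injective.

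Next I would check that $q>0$ on $\Delta$. Given any nonzero $y$ in the nonnegative orthant (in particular any $y\in\Delta$), set $x_i=\sqrt{y_i}$; then $x\neq 0$ and $q(y)=q(x_1^2,\dots,x_n^2)=p(x)>0$ since $p$ is positive definite. Applying the classical Polya theorem to $q$ produces an $r$ such that $(\sum_i y_i)^r q(y)$ has nonnegative coefficients. Substituting $y_i=x_i^2$ back in, and using that the substitution $y^\beta\mapsto x^{2\beta}$ is injective on monomials (so no cancellation can occur), the polynomial $(\sum_i x_i^2)^r p(x)=\big((\sum_i y_i)^r q(y)\big)\big|_{y_i=x_i^2}$ has nonnegative coefficients, which is exactly the claim with this value of $r$.

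The only real content beyond bookkeeping is the classical Polya theorem itself, which I would either invoke as a black box from \cite{Polya} or, for a self-contained treatment, prove along the standard route: expand $(\sum_i y_i)^r q(y)$ and observe that the coefficient of $y^\alpha$ (with $|\alpha|=r+d$), once divided by the multinomial coefficient $\binom{r+d}{\alpha}$, equals $q\big(\alpha/(r+d)\big)$ up to an error that is $O(1/r)$; since $q$ attains a strictly positive minimum on the compact set $\Delta$, all such coefficients become positive once $r$ is large. The one delicate point in that argument --- and the place I would be most careful --- is establishing the error estimate \emph{uniformly} in $\alpha$, including near the boundary of the simplex where some coordinates $\alpha_i$ are small.
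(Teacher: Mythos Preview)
Your approach is correct and matches the paper's treatment. The paper does not actually prove Theorem~\ref{th:polya}; it cites Poly\'a's original result and immediately remarks that the simplex formulation (``for any form $h$ positive on the standard simplex, $(\sum_i x_i)^r h(x)$ has nonnegative coefficients for some $r$'') is equivalent to the even-form formulation via the substitution $p(x)=h(x^2)$---which is precisely the reduction you carry out in detail. Your additional sketch of the classical Poly\'a argument (coefficients approximating values of $q$ at rational points of the simplex) is standard and goes beyond what the paper provides.
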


{\gh A perhaps better-known but equivalent formulation of this theorem is the following: for any form $h$ that is positive on the standard simplex, there exists $r \in \mathbb{N}$ such that $h(x)\cdot (\sum_i x_i)^r$ has nonnegative coefficients. The two formulations are equivalent by simply letting $p(x)=h(x^2)$. The latter formulation has been used to derive \aaa{similar optimization-free} converging hierarchies of lower bounds for polynomial minimization problems over the simplex; see, e.g., \cite{de2006ptas,deKlerk_StableSet_copositive}.}

Our aforementioned optimization-free hierarchy \aaa{also} enables us to obtain linear programming (LP) and second-order cone programming (SOCP)-based hierarchies for general POPs with compact feasible sets that rely on the concepts of \emph{dsos} and \emph{sdsos} polynomials. These are recently introduced inner approximations to the set of sos polynomials that have shown much better scalability properties in practice~\cite{iSOS_journal}.

As a final remark, we wish to stress the point that the goal of this paper is first and foremost theoretical, i.e., to provide methods for constructing converging hierarchies of lower bounds for POPs using as sole building blocks certificates of global positivity. {\gh We do not make any claims that these hierarchies can outperform the popular existing hierarchies due, e.g., to Lasserre \cite{lasserre_moment} and Parrilo \cite{sdprelax}. In particular, all hierarchies that we generate \aaa{increase the number of variables and the degree of the polynomials involved from $n$ to $n+m+3$, and from $d$ to $2d,$ respectively. They also} necessitate the use of bisection, which, while not a problem in theory, increases the computational overload.} \aaa{We remark however that each level of our hierarchies only involves either one sum of squares constraint (the hierarchy based on the certificate of Reznick; Theorem \ref{th:reznick.hierarchy}), two sum of squares constraints (the hierarchy based on the certificate of Artin; Theorem \ref{th:artin.hierarchy}), or nothing but elementary computations (the hierarchy based on the certificate of Poly\'a; Theorem \ref{th:sms.hierarchy}). By contrast, each level of the hierarchy based on Putinar's (resp. Schm\"udgen's) certificate involves $m+1$ (resp. $2^m$) sum of squares constraints, but necessitates no need to use bisection or to increase the number of variables/degree of the problem. Similarly, a hierarchy based on Stengle's certificate, which would work by showing infeasibility of the constraints $\{\gamma-p(x)\geq 0, g_1(x)\geq 0,\ldots, g_m(x)\geq 0 \}$, requires the use of bisection on $\gamma$ and $2^{m+1}$ sum of squares constraints in each level, but necessitates no increase in the number of variables/degree of the problem. Of course, such comparisons would become more meaningful if one could also relate the quality of the bounds obtained from the different approaches. Some remarks on why it is nontrivial to connect our hierarchies to previous ones in this sense are made in Section~\ref{sec:conclusion}.}



\subsection{Outline of the paper} The paper is structured as follows. In Section \ref{sec:nonneg.approx}, we show that if one can inner approximate the cone of positive definite forms arbitrarily well (with certain basic properties), then one can produce a converging hierarchy of lower bounds for POPs with compact feasible sets (Theorem~\ref{th:hierarchy}). This relies on a reduction (Theorem~\ref{th:slb}) that reduces the problem of certifying a strict lower bound on a POP to that of proving positivity of a certain form. In Section \ref{sec:sdp.hierarchy}, we see how this result can be used to derive semidefinite programming-based converging hierarchies (Theorems \ref{th:reznick.hierarchy} and \ref{th:artin.hierarchy}) from the Positivstellens\"atze by Artin (Theorem \ref{th:artin}) and Reznick (Theorem \ref{th:reznick.uniform}). In Section~\ref{sec:opt.free.LP.SOCP}, we derive an optimization-free hierarchy (Theorem \ref{th:sms.hierarchy}) from the Positivstellensatz of Poly\'a (Theorem \ref{th:polya}) as well as LP and SOCP-based hierarchies which rely on dsos/sdsos polynomials (Corollary \ref{cor:dsos.sdsos.hierarchy}). We conclude with a few open problems in Section \ref{sec:conclusion}.

\subsection{Notation and basic definitions}

We use the standard notation $A\succeq 0$ to denote that a symmetric matrix $A$ is positive semidefinite. Recall that a \emph{form} is a homogeneous polynomial, i.e., a polynomial whose monomials all have the same degree. We denote the degree of a form $f$ by $deg(f)$. We say that a form $f$ is \emph{nonnegative} (or positive semidefinite) if $f(x) \geq 0$, for all $x \in \mathbb{R}^n$ (we write $f\geq 0$). A form $f$ is \emph{positive definite} (pd) if $f(x) >0,$ for all nonzero $x$ in $\mathbb{R}^n$ (we write $f>0$). Throughout the paper, we denote the set of forms (resp. the set of nonnegative forms) in $n$ variables and of degree $d$ by $H_{n,d}$ (resp $P_{n,d}$). We denote the ball of radius $R$ and centered at the origin by $B(0,R)$ and the unit sphere in $x$-space, i.e., $\{x\in \mathbb{R}^n~|~||x||_2=1\}$, by $S_x$. We use the shorthand $f(y^2-z^2)$ for $y,z \in \mathbb{R}^n$ to denote $f(y_1^2-z_1^2,\ldots,y_n^2-z_n^2).$ We say that a scalar $\gamma$ is a strict lower bound on (\ref{eq:POP}) if $p(x)>\gamma,~\forall x\in S$. Finally, we ask the reader to carefully read Remark~\ref{rem:notation} which contains the details of a notational overwriting occurring before Theorem \ref{th:hierarchy} and valid from then on throughout the paper. This overwriting makes the paper much simpler to parse.

\section{Constructing converging hierarchies for POP using global certificates of positivity}\label{sec:nonneg.approx}

Consider the polynomial optimization problem in (\ref{eq:POP}) and denote its optimal value by $p^*$. Let $d$ be such that $2d$ is the smallest even integer larger than or equal to the maximum degree of $p, g_i, i=1,\ldots,m$. We denote the feasible set of our optimization problem by $$S=\{x \in \mathbb{R}^n~|~g_i(x)\geq 0, i=1,\ldots,m\}$$ and assume that $S$ is contained within a ball of radius $R$. From this, it is easy to provide (possibly very loose) upper bounds on $g_i(x)$ over the set $S$: as $S$ is contained in a ball of radius $R$, we have $|x_i|\leq R$, for all $i=1,\ldots,n$. We then use this to upper bound each monomial in $g_i$ and consequently $g_i$ itself. We use the notation $\eta_i$ to denote these upper bounds, i.e., $g_i(x) \leq \eta_i$, for all $i=1,\ldots,m$ and for all $x\in S$. Similarly, we can provide an upperbound on $-p(x)$. We denote such a bound by $\beta$, i.e., $-p(x) \leq \beta,$ $\forall x \in S.$

The goal of this section is to produce a method for constructing converging hierarchies of lower bounds for POPs if we have access to arbitrarily accurate inner approximations of the set of positive definite forms. The first theorem (Theorem \ref{th:slb}) connects lower bounds on (\ref{eq:POP}) to positive definiteness of a related form. The second theorem (Theorem~\ref{th:hierarchy}) shows how this can be used to derive a hierarchy for POPs.

\begin{theorem}\label{th:slb} Consider the general polynomial optimization problem in (\ref{eq:POP}) and recall that $d$ is such that $2d$ is the smallest even integer larger than or equal to the maximum degree of $p, g_i, i=1,\ldots,m$. Suppose $S \subseteq B(0,R)$ for some positive scalar $R$. Let $\eta_i, i=1,\ldots,m$ (resp.  $\beta$) be any finite upper bounds on $g_i(x), i=1,\ldots,m$ (resp. $-p(x)$).
	
	Then, a scalar $\gamma$ is a strict lower bound on (\ref{eq:POP}) if and only if the homogeneous sum of squares polynomial
	\begin{align}
	f_\gamma(x,s,y)\mathrel{\mathop{:}}=&\left( \gamma y^{2d}-y^{2d}p(x/y)-s_0^2y^{2d-2} \right)^2+\sum_{i=1}^m\left(y^{2d}g_i(x/y)-s_i^2 y^{2d-2}\right)^2 \label{eq:f.gamma}\\
	&+\left((R+\sum_{i=1}^m \eta_i +\beta+\gamma)^dy^{2d}-(\sum_{i=1}^n x_i^2+\sum_{i=0}^m s_i^2)^d-s_{m+1}^{2d}\right)^2 \nonumber
	\end{align} of degree $4d$ and in $n+m+3$ variables $(x_1,\ldots,x_n,s_0,\ldots,s_m,s_{m+1},y)$ is {\gh positive definite.\footnote{The reader will observe in the proof that the variables $(s_0,\ldots,s_{m+1})$ will serve as slack variables and the variable $y$ will be used for homogenization.}}
\end{theorem}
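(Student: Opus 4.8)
The plan is to exploit that $f_\gamma$ is, by inspection, a sum of three squares of forms of degree $2d$ (each bracket is the degree-$2d$ homogenization in $y$ of a polynomial that is affine in the $s_i^2$; this uses $d\ge 1$, so that $y^{2d-2}$ is a genuine monomial). Hence $f_\gamma\ge 0$ everywhere, so $f_\gamma$ is positive definite if and only if its only real zero is the origin, and a real point is a zero of $f_\gamma$ if and only if it annihilates all three brackets simultaneously. I would then prove the two implications of the theorem by reading ``common zero of the three brackets'' as ``a point of $S$ witnessing the failure of $p>\gamma$'', with $s_1,\dots,s_m$ recording the slack in $g_i\ge 0$, $s_0$ the slack in $p\le\gamma$, $s_{m+1}$ absorbing a ball constraint, and $y$ handling (de)homogenization.

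\textbf{(i) If $\gamma$ is a strict lower bound, then $f_\gamma$ is pd.} Let $(x,s,y)$ be a real zero. If $y=0$, the third bracket forces $\big(\sum_{i=1}^n x_i^2+\sum_{i=0}^m s_i^2\big)^d+s_{m+1}^{2d}=0$; both summands being nonnegative, this gives $x=0$ and $s_0=\dots=s_{m+1}=0$, i.e.\ the point is the origin. If $y\neq 0$, homogeneity lets me divide through by $y$ and assume $y=1$; then the first two brackets force $g_i(x)=s_i^2\ge 0$ for all $i$ (so $x\in S$) and $p(x)=\gamma-s_0^2\le\gamma$, contradicting $p>\gamma$ on $S$. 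So the only real zero is the origin.

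\textbf{(ii) If $\gamma$ is not a strict lower bound, then $f_\gamma$ is not pd.} Then $S\neq\emptyset$ and there is $x^*\in S$ with $p(x^*)\le\gamma$; I would build a nonzero real zero of $f_\gamma$ at $y=1$. Take $s_0:=\sqrt{\gamma-p(x^*)}$ and $s_i:=\sqrt{g_i(x^*)}$ for $i=1,\dots,m$ (both real, since $p(x^*)\le\gamma$ and $x^*\in S$), which kill the first two brackets. For the third, set $C:=R+\sum_{i=1}^m\eta_i+\beta+\gamma>0$; using that $R$, the $\eta_i$ and $\beta$ bound $\sum_i x_i^2$, the $g_i$ and $-p$ on $S$,
\[
\sum_{i=1}^n (x_i^*)^2+\sum_{i=0}^m s_i^2=\|x^*\|_2^2+\big(\gamma-p(x^*)\big)+\sum_{i=1}^m g_i(x^*)\le R+\beta+\gamma+\sum_{i=1}^m\eta_i=C,
\]
so $C^d-\big(\sum_{i=1}^n(x_i^*)^2+\sum_{i=0}^m s_i^2\big)^d\ge 0$ and $s_{m+1}:=\big(C^d-\big(\sum_{i=1}^n(x_i^*)^2+\sum_{i=0}^m s_i^2\big)^d\big)^{1/(2d)}$ is real and annihilates the third bracket. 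Then $(x^*,s_0,\dots,s_{m+1},1)$ is a nonzero real zero of $f_\gamma$, so $f_\gamma$ is not positive definite.

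\textbf{Main obstacle.} There is no difficult analytic step; the subtlety is entirely in why the third bracket is built as it is, since it must do two things at once: (a) force $x=s=0$ whenever $y=0$, so that no spurious common zero of the leading forms of $\gamma-p$ and the $g_i$ survives ``at infinity'' and the homogenization in $y$ is faithful (this is what makes direction (i) go through); and (b) be \emph{solvable} for a real $s_{m+1}$ in direction (ii), which is exactly why $C$ is taken to be $R+\sum_i\eta_i+\beta+\gamma$. Checking that this $C$ genuinely dominates $\sum_{i=1}^n x_i^2+\sum_{i=0}^m s_i^2$ at the candidate point --- i.e.\ that $R$, the $\eta_i$ and $\beta$ really are valid upper bounds for the respective quantities over $S$ --- is the only place that demands care.
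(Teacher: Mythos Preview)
Your proof is correct and follows essentially the same approach as the paper: the paper routes the argument through two auxiliary sets $T$ and $T_s$ (encoding feasibility with and without slack variables) and shows $\gamma$ is a strict lower bound $\iff T$ empty $\iff T_s$ empty $\iff f_\gamma$ pd, while you collapse this chain into a direct two-way argument by analyzing zeros of $f_\gamma$ with $y=0$ versus $y\neq 0$. The content and the key observations---the role of the third bracket at $y=0$ and the choice of $C$ so that a real $s_{m+1}$ exists---are identical.
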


\begin{proof}\renewcommand{\qedsymbol}{}
It is easy to see that $\gamma$ is a strict lower bound on (\ref{eq:POP}) if and only if the set $$T\mathrel{\mathop{:}}=\{x \in \mathbb{R}^n ~|~ \gamma- p(x) \geq 0; \quad g_i(x)\geq 0, i=1,\ldots,m;\quad \sum_i x_i^2 \leq R\}$$ is empty. Indeed, if $T$ is nonempty, then there exists a point $x \in S$ such that $p(x)\leq \gamma$. This implies that $\gamma$ cannot be a strict lower bound on (\ref{eq:POP}). Conversely, if $T$ is empty, the intersection of $S$ with $\{x~|~\gamma-p(x)\geq 0\}$ is empty, which implies that $\forall x \in S$, $p(x)>\gamma$.

We now define the set:
\begin{equation}\label{eq:def.Ts}
\begin{aligned}
T_s=\{(x,s) \in \mathbb{R}^{n+m+2} ~|~ \gamma-p(x)=s_0^2; \quad g_i(x)=s_i^2, i=1,\ldots,m; \phantom\}\\
 \quad \phantom\{(R+\sum_{i=1}^m \eta_i+\beta+\gamma)^d -(\sum_{i=1}^n x_i^2+\sum_{i=0}^m s_i^2)^d-s_{m+1}^{2d}=0\}. 
\end{aligned}
\end{equation}
Note that $T_s$ is empty if and only if $T$ is empty. Indeed, if $T_s$ is nonempty, then there exists $x \in \mathbb{R}^n$ and $s \in \mathbb{R}^{m+2}$ such that the three sets of equations are satisfied. This obviously implies that $\gamma -p(x)\geq 0$ and that $g_i(x)\geq 0$, for all $i=1,\ldots,m.$ It further implies that $\sum_i x_i^2 \leq R$ as by assumption, if $x \in S$, then $x$ is in a ball of radius $R$. 
Conversely, suppose now that $T$ is nonempty. There exists $x$ such that $\gamma-p(x)\geq 0$, $g_i(x)\geq 0$ for $i=1,\ldots,m$, and $\sum_i x_i^2 \leq R.$ Hence, there exist $s_0,\ldots,s_m$ such that $$\gamma-p(x)=s_0^2 \text{ and } g_i(x)={s_i}^2,~ i=1,\ldots,m.$$ Combining the fact that $\sum_i {x_i}^2 \leq R$ and the fact that $\eta_i$, $i=1,\ldots,m$ (resp. $\gamma+\beta$) are upperbounds on $g_i$ (resp. $\gamma-p({x})$), we obtain: 
$$R+\sum_{i=1}^m \eta_i+\beta+\gamma \geq \sum_{i=1}^n {x_i}^2+\sum_{i=0}^m {s_i}^2.$$  By raising both sides of the inequality to the power $d$, we show the existence of ${s}_{m+1}$.

We now show that $T_s$ is empty if and only if $f_{\gamma}(x,s,y)$ is positive definite. Suppose that $T_s$ is nonempty, i.e., there exists $({x},{s}) \in \mathbb{R}^{n+m+2}$ such that the equalities given in (\ref{eq:def.Ts}) hold. Note then that $f_{\gamma}(x,s,1)=0$. As $(x,s,1)$ is nonzero, this implies that $f_{\gamma}(x,s,y)$ is not positive definite. 

For the converse, assume that $f_{\gamma}(x,s,y)$ is not positive definite. As $f_{\gamma}(x,s,y)$ is a sum of squares and hence nonnegative, this means that there exists nonzero $(x,s,y)$ such that $f(x,s,y)=0$. We proceed in two cases. If $y \neq 0$, it is easy to see that $(x/y,s/y) \in T_s$ and $T_s$ is nonempty. Consider now the case where $y=0$. The third square in $f_{\gamma}$ being equal to zero gives us: $$-(\sum_i x_i^2+\sum_{i=0}^m s_i^2)^d=s_{m+1}^{2d}.$$
This implies that $s_{m+1}=0$ and that $x_1=\ldots=x_m={s_0}=\ldots={s}_m=0$ which contradicts the fact that $(x,s,y)$ is nonzero.
\end{proof}

\begin{remark}
Note that Theorem \ref{th:slb} implies that testing feasibility of a set of polynomial inequalities is no harder than checking whether a homogeneous polynomial that is sos has a zero. Indeed, as mentioned before, the basic semialgebraic set $$\{x~|~g_1(x)\geq 0,\ldots, g_m(x)\geq 0\}$$ is empty if and only if $\gamma=0$ is a strict lower bound on the POP
\begin{equation*}
\begin{aligned}
&\inf_x &&-g_1(x)\\
&\text{s.t. } &&g_2(x) \geq 0,\ldots,g_{m}(x)\geq 0.
\end{aligned}
\end{equation*}
In principle, this reduction can open up new possibilities for algorithms for testing feasibility of a basic semialgebraic set. For example, the work in \cite{AAA_Cubic_vec_field} shows that positive definiteness of a form $f$ is equivalent to global asymptotic stability of the polynomial vector field $\dot{x}=-\nabla f(x).$ One could as a consequence search for Lyapunov functions, as is done in \cite[Example 2.1.]{AAA_Cubic_vec_field}, to certify positivity of forms. Conversely, simulating trajectories of the above vector field can be used to minimize $f$ and potentially find its nontrivial zeros, which, by our reduction, can be turned into a point that belongs to the basic semialgebraic set at hand. 

We further remark that one can always take the degree of the sos form $f_{\gamma}$ in (\ref{eq:f.gamma}) whose positivity is under consideration to be equal to four. This can be done by changing the general POP in (\ref{eq:POP}) to only have quadratic constraints and a quadratic objective via an iterative introduction of new variables and new constraints in the following fashion: $x_{ij}=x_ix_j$.

\end{remark}

\begin{remark}[Notational remark]\label{rem:notation}
	As a consequence of Theorem~\ref{th:slb}, we now know that certifying lower bounds on (\ref{eq:POP}) is equivalent to proving positivity of the form $f_{\gamma}$ that appears in (\ref{eq:f.gamma}). {\gh To simplify notation, we define
	 $$N\mathrel{\mathop{:}}=n+m+3 \text{ and } D=2d,$$
	 where $n$ is the dimension of the decision variable of problem (\ref{eq:POP}), $d$ is such that $2d$ is the smallest even integer larger than or equal to the maximum degree of $g_i$ and $p$ in (\ref{eq:POP}), and $m$ is the number of constraints of problem (\ref{eq:POP}).
	 Note now that the form $f_{\gamma}$ is a polynomial in $N$ variables and of degree $2D$.
}

	\end{remark}

Our next theorem shows that, modulo some technical assumptions, if one can inner approximate the set of positive definite forms arbitrarily well (conditions (a) and (b)), then one can construct a converging hierarchy for POPs.

\begin{theorem} \label{th:hierarchy}
Let $K_{n,2d}^r$ be a sequence of sets (indexed by $r$) of homogeneous polynomials in $n$ variables and of degree $2d$ with the following properties:
\begin{enumerate}[(a)]
	\item $K_{n,2d}^r \subseteq P_{n,2d}, \forall r,$ and there exists a pd form $s_{n,2d} \in K_{n,2d}^0.$
	\item If $p>0$, then $\exists r \in \mathbb{N}$ such that $p \in K_{n,2d}^r.$
	\item $K_{n,2d}^r \subseteq K_{n,2d}^{r+1}$, $\forall r$.
	\item If $p \in K_{n,2d}^r$, then $\forall \epsilon \in [0,1]$, $p+\epsilon s_{n,d} \in K_{n,2d}^{r}.$ 
	
\end{enumerate}

Recall the definition of $f_{\gamma}(z)$ given in (\ref{eq:f.gamma}). Consider the hierarchy of optimization problems indexed by $r$:
\begin{equation}\label{eq:hierarchy}
\begin{aligned}
l_r\mathrel{\mathop{:}}=&\sup_{\gamma} &&\gamma\\
&\text{s.t. } &&f_{\gamma}(z)-\frac{1}{r}s_{N,2D}(z) \in K_{N,2D}^r.
\end{aligned}
\end{equation}
Then, $l_r \leq p^*$ for all $r$, $\{l_r\}$ is nondecreasing, and $\lim_{r \rightarrow \infty} l_r=p^*.$

\end{theorem}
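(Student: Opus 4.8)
The plan is to prove the three claims in turn, using Theorem~\ref{th:slb} as the bridge between lower bounds on the POP and positive definiteness of $f_\gamma$.

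\textbf{Upper bound $l_r \le p^*$.} Suppose $\gamma$ is feasible for the level-$r$ problem in (\ref{eq:hierarchy}), so $f_\gamma - \frac1r s_{N,2D} \in K_{N,2D}^r$. By property (a), $K_{N,2D}^r \subseteq P_{N,2D}$, so $f_\gamma - \frac1r s_{N,2D} \ge 0$. Since $s_{N,2D}$ is positive definite (again by (a)), adding $\frac1r s_{N,2D}$ to a nonnegative form yields a positive definite form; hence $f_\gamma > 0$. By Theorem~\ref{th:slb}, $\gamma$ is then a strict lower bound on (\ref{eq:POP}), so $\gamma < p^*$. Taking the supremum over feasible $\gamma$ gives $l_r \le p^*$.

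\textbf{Monotonicity.} I would show that any $\gamma$ feasible at level $r$ is feasible at level $r+1$. Write $f_\gamma - \frac{1}{r+1}s_{N,2D} = \big(f_\gamma - \frac1r s_{N,2D}\big) + \big(\frac1r - \frac1{r+1}\big)s_{N,2D}$. Set $\epsilon = \frac1r - \frac1{r+1} \in [0,1]$ (it is clearly in $(0,1]$). By property (c), $f_\gamma - \frac1r s_{N,2D} \in K_{N,2D}^{r+1}$, and then by property (d), adding $\epsilon s_{N,2D}$ keeps us in $K_{N,2D}^{r+1}$. Hence $\gamma$ is feasible at level $r+1$, so $l_{r+1}\ge l_r$.

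\textbf{Convergence $l_r \to p^*$.} Since $\{l_r\}$ is nondecreasing and bounded above by $p^*$, it has a limit $\le p^*$; I need the reverse inequality, i.e., for every $\gamma < p^*$ there is some $r$ with $l_r \ge \gamma$ — actually it suffices to show each such $\gamma$ is feasible at some level $r$, perhaps after a slight perturbation. Fix $\gamma < p^*$. Then $\gamma$ is a strict lower bound on (\ref{eq:POP}), so by Theorem~\ref{th:slb}, $f_\gamma$ is positive definite. The form $f_\gamma$ has degree $2D$ in $N$ variables, so by property (b) there exists $r_0$ with $f_\gamma \in K_{N,2D}^{r_0}$. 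The remaining issue is that level $r$ requires $f_\gamma - \frac1r s_{N,2D}$ (not $f_\gamma$ itself) to lie in $K_{N,2D}^r$. To handle this I would argue as follows: since $f_\gamma$ is pd and $S_x$ (the unit sphere in the $N$ variables) is compact, $f_\gamma - \delta s_{N,2D}$ is still pd, hence (by (b)) in $K_{N,2D}^{r_1}$ for some $r_1$, provided $\delta>0$ is small enough; but I want $\delta = \frac1r$ with the \emph{same} $r$ as the level index. The cleanest route is a small continuity/monotonicity trick: using strictness of the lower bound, pick $\gamma' \in (\gamma, p^*)$; then $f_{\gamma'}$ is pd, and I claim that for $r$ large, $f_\gamma - \frac1r s_{N,2D}$ dominates (is $\ge$, coefficientwise or pointwise up to rescaling) a positive multiple of $f_{\gamma'}$ — more straightforwardly, since $f_\gamma - \frac1r s_{N,2D} \to f_\gamma$ uniformly on $S_x$ as $r\to\infty$ and $f_\gamma$ is bounded below by a positive constant on $S_x$, for $r$ large enough $f_\gamma - \frac1r s_{N,2D}$ is pd; fix such an $r=r^*$ large enough that it is pd, then by (b) it lies in $K_{N,2D}^{r'}$ for some $r'$, and by (c) (monotonicity in $r$) it lies in $K_{N,2D}^{\max(r^*,r')}$. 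Finally, using (d) once more to absorb the discrepancy between $\frac{1}{r^*}$ and $\frac{1}{\max(r^*,r')}$ (their difference times $s_{N,2D}$), we conclude $\gamma$ is feasible at level $\max(r^*,r')$, so $l_{\max(r^*,r')} \ge \gamma$. Letting $\gamma \uparrow p^*$ gives $\lim_r l_r \ge p^*$.

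\textbf{Main obstacle.} The bookkeeping in the convergence step is the delicate part: property (b) gives membership of a \emph{pd} form at \emph{some} level, but the hierarchy subtracts a level-dependent multiple $\frac1r s_{N,2D}$, so one must carefully interleave (b), (c), and (d) — and the elementary fact that $f_\gamma - \frac1r s_{N,2D}$ is pd for large $r$ (which uses compactness of the sphere and positive definiteness of both $f_\gamma$ and $s_{N,2D}$) — to land at a single consistent index $r$. Everything else (the upper bound and monotonicity) is a direct application of properties (a), (c), (d) together with Theorem~\ref{th:slb}.
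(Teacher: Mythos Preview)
Your proof is correct and follows essentially the same route as the paper: property~(a) plus Theorem~\ref{th:slb} for the upper bound, the identity $f_\gamma-\tfrac{1}{r+1}s=(f_\gamma-\tfrac{1}{r}s)+\tfrac{1}{r(r+1)}s$ with (c)--(d) for monotonicity, and for convergence the observation that $f_\gamma-\tfrac{1}{r^*}s$ is pd for some $r^*$, then (b) puts it in $K^{r'}$, and (c)--(d) align the two indices at $\max(r^*,r')$. The paper splits the last step into the two cases $r''\le r'$ and $r'\le r''$ rather than using $\max$, but this is purely cosmetic; your detour via $\gamma'\in(\gamma,p^*)$ is unnecessary and you rightly abandon it.
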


\begin{proof} 
	
	We first show that the sequence $\{l_r\}$ is upperbounded by $p^*$. Suppose that a scalar $\gamma$ satisfies $$f_{\gamma}(z) -\frac{1}{r}s_{N,2D}(z) \in K_{N,2D}^r.$$ We then have $f_{\gamma}(z)-\frac{1}{r}s_{N,2D}(z) \in P_{N,2D}$ using (a). This implies that $f_{\gamma}(z)~\geq~\frac{1}{r}s_{N,2D}(z) $, and hence $f_{\gamma}$ is pd as $s_{N,2D}$ is pd. {\gh From Theorem \ref{th:slb}, it follows that $\gamma$ has to be a strict lower bound on the optimal value of (\ref{eq:POP}).} As $\gamma < p^*$, we have that $l_r \leq p^*$ for all $r$.\\
	
We now show monotonicity of the sequence $\{l_r\}$. Let $\gamma$ be such that $$f_{\gamma}(z)-\frac{1}{r} s_{N,2D}(z) \in K_{N,2D}^r.$$ We have the following identity:$$f_{\gamma}(z)-\frac{1}{r+1} s_{N,2D}(z)=f_{\gamma}(z)-\frac{1}{r}s_{N,2D}(z)+\frac{1}{r(r+1)}s_{N,2D}(z).$$
Now, using the assumption and properties (c) and (d), we conclude that 
$$f_{\gamma}(z)-\frac{1}{r+1}s_{N,2D}(z) \in K_{N,2D}^{r+1}.$$ This implies that $$\{\gamma ~|~ f_{\gamma}(z)-\frac{1}{r}s_{N,2D}(z) \in K_{N,2D}^r\} \subseteq \{\gamma ~|~ f_{\gamma}(z)-\frac{1}{r+1}s_{N,2D}(z) \in K_{N,2D}^{r+1}\}$$ and that $l_r \leq l_{r+1}.$\\

Note that as the sequence $\{l_r\}$ is upper bounded and nondecreasing, it converges. Let us show that the limit of this sequence is $p^*$. To do this, we show that for any strict lower bound $\gamma$ on (\ref{eq:POP}), there exists a positive integer $r$ such that $f_{\gamma}(z)-\frac{1}{r} s_{N,2D}(z) \in K_{N,2D}^r$. By Theorem~\ref{th:slb}, as $\gamma$ is a strict lower bound, $f_{\gamma}(z)$ is positive definite. {\gh As a form is positive definite if and only if it is positive on the unit sphere, by continuity, there exists a positive integer $r'$ such that $f_{\gamma}(z)-\frac{1}{r'}s_{N,2D}(z)$ is positive definite.} Using (b), this implies that there exists a positive integer $r''$ such that 
\begin{equation}\label{eq:f.gam.in.cone}
f_{\gamma}(z)-\frac{1}{r'}s_{N,2D}(z) \in K_{N,2D}^{r''}.
\end{equation}
We now proceed in two cases. If $r'' \leq r'$, we take $r=r'$ and use property (c) to conclude. If $r' \leq r''$, we have
  $$f_{\gamma}(z)-\frac{1}{r''}s_{N,2D}(z)=f_{\gamma}(z)-\frac{1}{r'}s_{N,2D}(z)+\frac{r''-r'}{r'\cdot r''}s_{N,2D}(z).$$
  We take $r=r''$ and use (\ref{eq:f.gam.in.cone}) and properties (c) and (d) to conclude.
\end{proof}

\begin{remark} Note that condition (d) is subsumed by the more natural condition that $K_{n,d}^r$ be a convex cone for any $n,d,$ and $r$. However, there are interesting and relevant cones which we cannot prove to be convex though they trivially satisfy condition (d) (see Theorem \ref{th:reznick.hierarchy} for an example).
	\end{remark}

\section{Semidefinite programming-based hierarchies obtained from Artin's and Reznick's Positivstellens\"atze}\label{sec:sdp.hierarchy}

In this section, we construct two different semidefinite programming-based hierarchies for POPs using Positivstellens\"atze derived by Artin (Theorem \ref{th:artin}) and Reznick (Theorem \ref{th:reznick.uniform}). To do this, we introduce two sets of cones that we call the Artin and Reznick cones.

\begin{definition}\label{def:reznick.artin.cones}
We define the \emph{Reznick cone} of level $r$ to be $$R_{n,2d}^{r}\mathrel{\mathop{:}}=\{p \in H_{n,2d}~|~ p(x)\cdot \left(\sum_{i=1}^n x_i^2\right)^r \text{ is sos}\}.$$
Similarly, we define the \emph{Artin cone} of level $r$ to be $$A_{n,2d}^{r} \mathrel{\mathop{:}}=\{p \in H_{n,2d}~|~ p(x) \cdot q(x) \text{ is sos for some {\gh nonzero} sos form $q$ of degree $2r$} \}.$$
	
\end{definition}

We show that both of these cones produce hierarchies of the type discussed in Theorem \ref{th:hierarchy}. Recall that $p^*$ is the optimal value of problem (\ref{eq:POP}) and that {\gh $f_{\gamma}$ is a polynomial in $N$ variables and of degree $2D$ as defined in (\ref{eq:f.gamma}) and Remark \ref{rem:notation}.}

\begin{theorem}\label{th:reznick.hierarchy}
	Consider the hierarchy of optimization problems indexed by $r$:
	\begin{equation}\label{eq:reznick.hierarchy}
	\begin{aligned}
	l_r\mathrel{\mathop{:}}=&\sup_{\gamma} &&\gamma\\
	&\text{s.t. } &&f_{\gamma}(z)-\frac{1}{r}(\sum_{i=1}^N z_i^2)^{D} \in R_{N,2D}^r.
	\end{aligned}
	\end{equation}
	Then, $l_r \leq p^*$ for all $r$, $\{l_r\}$ is nondecreasing, and $\lim_{r \rightarrow \infty} l_r=p^*.$
\end{theorem}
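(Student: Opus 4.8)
The plan is to verify that the Reznick cones $R_{N,2D}^r$ satisfy the four hypotheses (a)--(d) of Theorem~\ref{th:hierarchy}, with the distinguished positive definite form taken to be $s_{N,2D}(z) = (\sum_{i=1}^N z_i^2)^D$, and then simply invoke Theorem~\ref{th:hierarchy} verbatim. This reduces the whole statement to a short checklist, and the main content is purely the verification of properties (a)--(d) for this particular sequence of cones.

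\textbf{Property (a).} I would first observe that if $p \cdot (\sum_i z_i^2)^r$ is sos, then it is in particular nonnegative, and since $(\sum_i z_i^2)^r > 0$ for $z \neq 0$, the form $p$ itself is nonnegative; hence $R_{N,2D}^r \subseteq P_{N,2D}$. For the existence of a pd form in $R_{N,2D}^0$, note that $s_{N,2D}(z) = (\sum_i z_i^2)^D = (\sum_i z_i^2)^{D}$ is itself a sum of squares (being an even power of $\sum_i z_i^2$, or more simply a perfect square since $D$ is even... in any case $(\sum z_i^2)^D$ is sos as $D = 2d$), so $s_{N,2D} \in R_{N,2D}^0$, and it is clearly positive definite.

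\textbf{Properties (b) and (c).} Property (b) is precisely the content of Reznick's theorem (Theorem~\ref{th:reznick.uniform}): if $p$ is a positive definite form, then there exists $r \in \mathbb{N}$ with $p \cdot (\sum_i z_i^2)^r$ sos, i.e. $p \in R_{N,2D}^r$. Here one should note that $f_\gamma - \frac1r s_{N,2D}$, when it is pd, is a form of the correct degree $2D$ in $N$ variables, so Reznick's theorem applies directly. Property (c), the nesting $R_{n,2d}^r \subseteq R_{n,2d}^{r+1}$, follows because if $p \cdot (\sum_i z_i^2)^r$ is sos then multiplying by the sos form $\sum_i z_i^2$ keeps it sos, giving $p \cdot (\sum_i z_i^2)^{r+1}$ sos.

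\textbf{Property (d).} This is the only mildly delicate point, though still routine: I need that if $p \in R_{n,2d}^r$ then $p + \epsilon\, s_{n,2d} \in R_{n,2d}^r$ for all $\epsilon \in [0,1]$ (in fact for all $\epsilon \geq 0$). We have $(p + \epsilon (\sum_i x_i^2)^d)\cdot (\sum_i x_i^2)^r = p\cdot(\sum_i x_i^2)^r + \epsilon (\sum_i x_i^2)^{d+r}$; the first term is sos by hypothesis and the second is $\epsilon$ times an sos form (an even power of $\sum x_i^2$), so the sum is sos. Note this also shows $R_{n,2d}^r$ is closed under addition and nonnegative scaling, hence a convex cone, so (d) holds a fortiori. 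With (a)--(d) established, Theorem~\ref{th:hierarchy} applies directly and yields $l_r \leq p^*$, monotonicity, and $\lim_{r\to\infty} l_r = p^*$. The only genuine obstacle is making sure the dimension/degree bookkeeping is consistent --- that $f_\gamma$ really is a form in $N = n+m+3$ variables of degree $2D = 4d$ as recorded in Remark~\ref{rem:notation}, so that the abstract cones are being applied in the right ambient space --- but this has already been set up, so the proof is essentially a one-line reduction to Theorem~\ref{th:hierarchy} once (a)--(d) are checked.
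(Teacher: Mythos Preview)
Your proposal is correct and follows essentially the same approach as the paper: verify that the Reznick cones satisfy conditions (a)--(d) of Theorem~\ref{th:hierarchy} with $s_{n,2d}=(\sum_i x_i^2)^d$, and then invoke that theorem. The only cosmetic difference is that the paper establishes (d) by first proving $R_{n,2d}^r$ is a convex cone containing $s_{n,2d}$, whereas you verify (d) directly via the identity $(p+\epsilon(\sum_i x_i^2)^d)(\sum_i x_i^2)^r = p(\sum_i x_i^2)^r + \epsilon(\sum_i x_i^2)^{d+r}$ and note convexity afterward; the content is the same.
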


\begin{proof}
It suffices to show that the Reznick cones $R_{n,2d}^r$ satisfy properties (a)-(d) in Theorem \ref{th:hierarchy}. The result will then follow from that theorem. For property (a), it is clear that, as $(\sum_i x_i^2)^r>0$ and $p(x) \cdot (\sum_i x_i^2)^r$ is a sum of squares and hence nonnegative, $p(x)$ must be nonnegative, so $R_{n,2d}^r \subseteq P_{n,2d}.$ Furthermore, the form $s_{n,2d}\mathrel{\mathop{:}}=(\sum_i x_i^2)^{d}$ belongs to $R_{n,2d}^0$ and is positive definite. Property (b) is verified as a consequence of Theorem \ref{th:reznick.uniform}. For (c), note that if $p(x) \cdot (\sum_i x_i^2)^r$ is sos, then $p(x) \cdot (\sum_i x_i^2)^{r+1}$ is sos since the product of two sos polynomials is sos. Finally, for property (d), note that $R_{n,2d}^r$ is a convex cone. Indeed, for any $\lambda \in [0,1]$, $$(\lambda p(x) +(1-\lambda) q(x)) \cdot (\sum_i x_i^2)^r=\lambda p(x) (\sum_i x_i^2)^r+(1-\lambda) q(x) (\sum_i x_i^2)^r $$
	is sos if $p$ and $q$ are in $R_{n,2d}^r$. Combining the fact that $R_{n,2d}^r$ is a convex cone and the fact that $(\sum_i x_i^2)^d \in R_{n,d}^r$, we obtain (d).
	\end{proof}

\begin{remark}\label{rem:bisection}
	To solve a fixed level $r$ of the hierarchy given in Theorem \ref{th:reznick.hierarchy}, one must proceed by \emph{bisection} on $\gamma$ \aaa{(since the parameter $\gamma$ appears with a power in the definition of $f_\gamma$ in (\ref{eq:f.gamma})).} Bisection here would produce a sequence of upper bounds $\{U_k\}$ and lower bounds $\{L_k\}$ on $l_r$ as follows. At iteration $k$, we test whether $\gamma=\frac{U_k+L_k}{2}$ is feasible for (\ref{eq:reznick.hierarchy}). If it is, then we take $L_{k+1}=\frac{U_k+L_k}{2}$ and $U_{k+1}=U_k$. If it is not, we take $U_{k+1}=\frac{U_k+L_k}{2}$ and $L_{k+1}=L_k$. We stop when $|U_{k_\epsilon}-L_{k_\epsilon}|<\epsilon$, where $\epsilon$ is a prescribed accuracy, and the algorithm returns $l_{r,\epsilon}=L_{k_\epsilon}.$ Note that $l_{r}-\epsilon \leq l_{r,\epsilon} \leq l_r$ and that to obtain $l_{r,\epsilon}$, one needs to take a logarithmic (in $\frac{1}{\epsilon}$) number of steps using this method. 
	
	Hence, solving the $r^{th}$ level of this hierarchy using bisection can be done by semidefinite programming. Indeed, for a fixed $r$ and $\gamma$ given by the bisection algorithm, one simply needs to test membership of $$\left( f_{\gamma}(z)-\frac{1}{r}(\sum_i z_i^2)^D\right) \cdot (\sum_i z_i^2)^r$$ to the set of sum of squares polynomials. This amounts to solving a semidefinite program. We remark that all semidefinite programming-based hierarchies available only produce an approximate solution to the optimal value of the SDP solved at level $r$ in polynomial time. This is independent of whether they use bisection (e.g., such as the hierarchy given in Theorem \ref{th:reznick.hierarchy} or the one based on Stengle's Positivstellensatz) or not (e.g., the Lasserre hierarchy).
	\end{remark}

Our next theorem improves on our previous hierarchy by freeing the multiplier $(\sum_{i=1}^N z_i^2)^r$ and taking advantage of our ability to search for an optimal multiplier using semidefinite programming.

\begin{theorem}\label{th:artin.hierarchy} Recall the definition of Artin cones from Definition \ref{def:reznick.artin.cones}. Consider the hierarchy of optimization problems indexed by $r$:
	\begin{equation}\label{eq:artin.hierarchy}
	\begin{aligned}
	l_r\mathrel{\mathop{:}}=&\sup_{\gamma,q} &&\gamma\\
	&\text{s.t. } &&f_{\gamma}(z)-\frac{1}{r}(\sum_{i=1}^N z_i^2)^{D} \in A_{N,2D}^r.
	\end{aligned}
	\end{equation}
	Then, $l_r \leq p^*$ for all $r$, $\{l_r\}$ is nondecreasing, and $\lim_{r \rightarrow \infty} l_r=p^*.$
\end{theorem}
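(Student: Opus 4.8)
The plan is to show that the Artin cones $A_{N,2D}^r$ satisfy the four properties (a)--(d) required by Theorem~\ref{th:hierarchy}, exactly in the spirit of the proof of Theorem~\ref{th:reznick.hierarchy} for the Reznick cones; the conclusion then follows verbatim from Theorem~\ref{th:hierarchy}, with $s_{n,2d}:=(\sum_i x_i^2)^d$ again serving as the distinguished positive definite form in $K_{n,2d}^0$.

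For property~(a): if $p\in A_{n,2d}^r$, then $p\cdot q$ is sos for some nonzero sos form $q$; a nonzero sos form is strictly positive on the complement of a proper algebraic variety, hence on a dense set, so $p\cdot q\ge 0$ everywhere forces $p\ge 0$ by continuity, giving $A_{n,2d}^r\subseteq P_{n,2d}$. Also $(\sum_i x_i^2)^d\in A_{n,2d}^0$ (take $q\equiv 1$, a nonzero sos form of degree $0$) and it is pd. For property~(b), the cleanest route is to observe that $R_{n,2d}^r\subseteq A_{n,2d}^r$ via the multiplier $q(x)=(\sum_i x_i^2)^r$, a nonzero sos form of degree $2r$; thus (b) for the Artin cones is inherited from (b) for the Reznick cones, i.e.\ from Reznick's theorem (Theorem~\ref{th:reznick.uniform}) applied to the pd form $p$. (One could instead invoke Artin's theorem (Theorem~\ref{th:artin}) directly, but then one must argue that the sos multiplier can be chosen homogeneous of even degree, which the Reznick route sidesteps.) For property~(c): if $p\cdot q$ is sos with $q$ a nonzero sos form of degree $2r$, then $q\cdot(\sum_i x_i^2)$ is a nonzero sos form of degree $2(r+1)$ and $p\cdot\bigl(q\cdot(\sum_i x_i^2)\bigr)=(p\cdot q)\cdot(\sum_i x_i^2)$ is sos, so $p\in A_{n,2d}^{r+1}$.

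The one step deserving care — and the reason we cannot simply cite convexity of the cone as in the Reznick case — is property~(d), since $A_{n,2d}^r$ is not known to be convex. Here one argues directly: if $p\in A_{n,2d}^r$ with witnessing multiplier $q$, then for every $\epsilon\in[0,1]$ the \emph{same} $q$ works, because $\bigl(p+\epsilon(\sum_i x_i^2)^d\bigr)\cdot q = p\cdot q + \epsilon(\sum_i x_i^2)^d\cdot q$ is a sum of two sos forms (the second a product of sos forms) and hence sos; thus $p+\epsilon s_{n,2d}\in A_{n,2d}^r$. With (a)--(d) in hand, Theorem~\ref{th:hierarchy} yields $l_r\le p^*$ for all $r$, monotonicity of $\{l_r\}$, and $\lim_{r\to\infty} l_r=p^*$. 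I would close with the analogue of Remark~\ref{rem:bisection}: once $\gamma$ is fixed by bisection, testing membership $f_\gamma(z)-\tfrac{1}{r}(\sum_i z_i^2)^D\in A_{N,2D}^r$ amounts to searching jointly for a sos form $q$ of degree $2r$ and a sos certificate for the product $\bigl(f_\gamma(z)-\tfrac{1}{r}(\sum_i z_i^2)^D\bigr)\cdot q$, which is a single semidefinite feasibility problem (linear constraints linking the two Gram matrices), so each level is SDP-representable. The main obstacle is thus essentially bookkeeping: handling property~(d) without convexity; everything else transcribes from the Reznick argument.
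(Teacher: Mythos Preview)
Your proof is correct and follows essentially the same approach as the paper: verify properties (a)--(d) of Theorem~\ref{th:hierarchy} for the Artin cones, with $s_{n,2d}=(\sum_i x_i^2)^d$, and in particular handle (d) by reusing the same multiplier $q$ rather than appealing to convexity---exactly as the paper does. The one noteworthy deviation is your treatment of (b): the paper invokes Artin's theorem (Theorem~\ref{th:artin}) directly, whereas you route through the inclusion $R_{n,2d}^r\subseteq A_{n,2d}^r$ and Reznick's theorem; your choice is arguably cleaner, since it sidesteps the (minor, but real) need to pass from Artin's polynomial multiplier to a homogeneous one of some fixed even degree.
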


\begin{proof}
	Just as the previous theorem, it suffices to show that the Artin cones $A_{n,2d}^r$ satisfy properties (a)-(d) of Theorem \ref{th:hierarchy}. The proof of property (a) follows the proof given for Theorem \ref{th:reznick.hierarchy}. Property (b) is satisfied as a (weaker) consequence of Artin's result (see Theorem \ref{th:artin}). For (c), we have that if $p(x) \cdot q(x)$ is sos for some {\gh nonzero} sos polynomial {\gh $q$} of degree $2r$, then $p(x) \cdot q(x) \cdot (\sum_i x_i^2)$ is sos, and $q(x)\cdot (\sum_i x_i^2)$ has degree $2(r+1)$. Finally, for (d), suppose that $p \in A_{n,2d}^r$. Then there exists an sos form $q$ such that $p(x) \cdot q(x)$ is sos. We have $$\left(p(x)+\epsilon (\sum_i x_i^2)^{d}\right) \cdot q(x)=p(x)\cdot q(x)+\epsilon (\sum_i x_i^2)^{d} \cdot q(x),$$ which is sos as the product (resp. sum) of two sos polynomials is sos.
	\end{proof}

Note that again, for any fixed $r$, the level $r$ of the hierarchy can be solved using bisection which leads to a sequence of semidefinite programs.

Our developments in the past two sections can be phrased in terms of a Positivstellensatz.

\begin{corollary}[A new Positivstellensatz]\label{cor:SDP.psatz}
	Consider the basic semialgebraic set $$S\mathrel{\mathop{:}}=\{x \in \mathbb{R}^n~|~ g_i(x)\geq 0, i=1,\ldots,m\}$$ and a polynomial $p\mathrel{\mathop{:}}=p(x)$. Suppose that $S$ is contained within a ball of radius $R$. Let $\eta_i$ and $\beta$ be any finite upperbounds on $g_i(x)$ and, respectively, $-p(x)$ over the set $S$.\footnote{As discussed at the beginning of Section \ref{sec:nonneg.approx}, such bounds are very easily computable.} Let $d$ be such that $2d$ is the smallest integer larger than or equal to the maximum degree of $p, g_i, i=1,\ldots,m$. Then, $p(x)>0$ for all $x \in S$ if and only if there exists a positive integer $r$ such that
	$$\left(h(x,s,y)-\frac{1}{r} \left(\sum_{i=1}^n x_i^2+\sum_{j =0}^{m+1} s_j^2+y^2\right)^{2d}\right) \cdot \left(\sum_{i=1}^n x_i^2+\sum_{j =0}^{m+1} s_j^2+y^2\right)^{r}$$ is a sum of squares, where the form $h$ in variables $(x_1,\ldots,x_n, s_0,\ldots,s_{m+1},y)$ is as follows:
	 \begin{align*}
	h(x,s,y)\mathrel{\mathop{:}}=&\left(y^{2d}p(x/y)+s_0^2y^{2d-2} \right)^2+\sum_{i=1}^m\left(y^{2d}g_i(x/y)-s_i^2 y^{2d-2}\right)^2\\
	&+\left((R+\sum_{i=1}^m \eta_i +\beta)^dy^{2d}-(\sum_{i=1}^n x_i^2+\sum_{i=0}^m s_i^2)^d-s_{m+1}^{2d}\right)^2.
	\end{align*}

\end{corollary}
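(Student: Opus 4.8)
The plan is to derive Corollary~\ref{cor:SDP.psatz} directly from Theorem~\ref{th:slb} together with Reznick's Positivstellensatz (Theorem~\ref{th:reznick.uniform}), essentially by unwinding the notation. First I would observe that the form $h(x,s,y)$ in the statement is exactly $f_{\gamma}(x,s,y)$ from~(\ref{eq:f.gamma}) specialized to $\gamma = 0$: indeed $\left(\gamma y^{2d} - y^{2d}p(x/y) - s_0^2 y^{2d-2}\right)^2$ becomes $\left(y^{2d}p(x/y) + s_0^2 y^{2d-2}\right)^2$ when $\gamma = 0$ (up to the irrelevant overall sign inside the square), the middle sum is unchanged, and the last square has $\gamma$ dropped from the constant. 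So $h = f_0$. By Theorem~\ref{th:slb} applied with $\gamma = 0$ (note $p > 0$ on $S$ is precisely the statement that $0$ is a strict lower bound on the POP $\inf_{x\in S} p(x)$), we get that $p(x) > 0$ for all $x \in S$ if and only if $f_0 = h$ is positive definite as a form in the $N = n + m + 3$ variables $(x,s,y)$.

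Next I would translate ``$h$ is positive definite'' into the sum-of-squares certificate in the statement. For the ``only if'' direction: suppose $p>0$ on $S$, so $h$ is pd. Since the unit sphere is compact, there is a positive integer $r_0$ such that $h(z) - \frac{1}{r_0}\left(\sum_i z_i^2\right)^{2d}$ is still positive definite (here $z = (x,s,y)$ ranges over $\mathbb{R}^N$ and the form $\left(\sum_i z_i^2\right)^{2d}$ has the same degree $4d = 2D$ as $h$). Applying Reznick's theorem to this pd form yields an integer $r_1$ with $\left(h(z) - \frac{1}{r_0}(\sum_i z_i^2)^{2d}\right)\cdot(\sum_i z_i^2)^{r_1}$ sos. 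Then I would take $r = \max(r_0, r_1)$ and argue, exactly as in the last paragraph of the proof of Theorem~\ref{th:hierarchy} (the two-case argument using that $R_{N,2D}^{r}$ is a convex cone containing $(\sum_i z_i^2)^D$), that $\left(h(z) - \frac{1}{r}(\sum_i z_i^2)^{2d}\right)\cdot(\sum_i z_i^2)^{r}$ is sos. This is the claimed certificate with the stated value of $r$.

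For the ``if'' direction: suppose for some positive integer $r$ the displayed product is sos. Since $(\sum_i z_i^2)^r > 0$ for $z \neq 0$ and the product is nonnegative everywhere, we get $h(z) - \frac{1}{r}(\sum_i z_i^2)^{2d} \geq 0$ for all $z$, hence $h(z) \geq \frac{1}{r}(\sum_i z_i^2)^{2d} > 0$ for all nonzero $z$, i.e., $h = f_0$ is positive definite. By the equivalence in Theorem~\ref{th:slb} (with $\gamma=0$), this means $0$ is a strict lower bound on $\inf_{x\in S}p(x)$, i.e., $p(x) > 0$ for all $x \in S$. One should also check the boundary bookkeeping: the statement's hypothesis says $2d$ is the smallest integer (rather than even integer) above the degrees; I would note that if $2d$ happens to be odd one simply replaces it by $2d+1$ or, more cleanly, points the reader to the convention in Section~\ref{sec:nonneg.approx}, since this does not affect the argument.

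The main obstacle is not conceptual but one of careful bookkeeping: verifying that the degree of the perturbation form $(\sum_i z_i^2)^{2d}$ matches $\deg h = 4d = 2D$ so that Reznick's theorem applies to a \emph{form} of the right parity, confirming that $h$ really is the $\gamma=0$ specialization of $f_\gamma$ (including that the sign flip inside the first square is harmless), and reproducing the two-case ``raise the multiplier power'' argument to land at a single clean exponent $r$ in both the perturbation and the multiplier. None of these steps is hard, but they must be stated precisely since the corollary is advertised as a self-contained Positivstellensatz.
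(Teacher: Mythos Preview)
Your proposal is correct and follows essentially the same route as the paper, which simply states that the corollary is immediate from the arguments in the proofs of Theorem~\ref{th:slb} and Theorem~\ref{th:reznick.hierarchy} specialized to $\gamma=0$. Your unwinding of those arguments (identifying $h=f_0$, invoking Reznick's theorem after perturbing by $\frac{1}{r_0}(\sum_i z_i^2)^{2d}$, and then using the two-case $r=\max(r_0,r_1)$ step from the proof of Theorem~\ref{th:hierarchy}) is exactly what the paper has in mind.
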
 

\begin{proof}
	This is an immediate corollary of arguments given in the proof of Theorem \ref{th:slb} and in the proof of Theorem \ref{th:reznick.hierarchy} for the case where $\gamma=0.$
\end{proof}

\section{Poly\'a's theorem and hierarchies for POPs that are optimization-free, LP-based, and SOCP-based}\label{sec:opt.free.LP.SOCP}

In this section, we use a result by Poly\'a on global positivity of even forms to obtain new hierarchies for polynomial optimization problems. In Section~\ref{subsec:opt.free}, we present a hierarchy that is \emph{optimization-free}, in the sense that each level of the hierarchy only requires multiplication of two polynomials and checking if the coefficients of the resulting polynomial are nonnegative. In Section~\ref{subsec:lp.socp}, we use the previous hierarchy to derive linear programming and second-order cone programming-based \aaa{(converging) hierarchies that in each level produce a lower bound on the POP whose quality is at least as good as that of the optimization-free hierarchy}. These rely on the recently developed concepts of dsos and sdsos polynomials (see Definition \ref{def:dsos.sdsos} and \cite{iSOS_journal}), which are alternatives to sos polynomials that have been used in diverse applications to improve scalability; see \cite[Section 4]{iSOS_journal}.

\subsection{An optimization-free hierarchy of lower bounds for POPs}\label{subsec:opt.free}

The main theorem in this section presents an optimization-free hierarchy of lower bounds for general POPs with compact feasible sets:

\begin{theorem}\label{th:sms.hierarchy}
	Recall the definition of $f_{\gamma}(z)$ as given in (\ref{eq:f.gamma}), with $z \in \mathbb{R}^N$ and $deg(f_{\gamma})=2D.$ Let $(v,w) \in \mathbb{R}^{2n}$ and define	{\gh
	\begin{equation}\label{eq:def.Knd}
	\begin{aligned}
	&Pol_{n,2d}^r\mathrel{\mathop{:}}=&&\{p \in H_{n,2d}~|~\left(p(v^2-w^2)+ \frac{1}{2r}\left(\sum_{i=1}^n (v_i^4+w_i^4)\right)^d\right) \cdot \left(\sum_i v_i^2+\sum_i w_i^2\right)^{r^2} \phantom\}\\
	& &&\phantom\{ \text{ has nonnegative coefficients } \}.
	\end{aligned}
	\end{equation}
}
	
	Consider the hierarchy of optimization problems indexed by $r$:
	\begin{equation}\label{eq:sms.hierarchy}
	\begin{aligned}
	l_r\mathrel{\mathop{:}}=&\sup_{\gamma} &&\gamma\\
	&\text{s.t. } && f_{\gamma}(z)-\frac{1}{r}(\sum_{i=1}^N z_i^2)^D \in Pol_{N,2D}^r.
	\end{aligned}
	\end{equation}
	Let $m_r=\max_{i=1,\ldots,r} l_i$. Then $m_r \leq p^*$ for all $r$, $\{m_r\}$ is nondecreasing, and $\lim_{r \rightarrow \infty} m_r=p^*$.
\end{theorem}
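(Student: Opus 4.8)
The plan is to mimic the structure of the proofs of Theorems~\ref{th:reznick.hierarchy} and~\ref{th:artin.hierarchy}: I want to exhibit the sets $Pol_{n,2d}^r$ (or rather a slight reindexing of them) as an instance of the abstract family $K_{n,2d}^r$ from Theorem~\ref{th:hierarchy}, so that the monotonicity and convergence statements follow from that theorem. The subtlety, which is exactly why the statement of Theorem~\ref{th:sms.hierarchy} uses $m_r=\max_{i\le r} l_i$ rather than $l_r$ itself, is that the sets $Pol_{n,2d}^r$ need not be nested in $r$: increasing $r$ simultaneously shrinks the additive perturbation $\frac{1}{2r}(\sum_i(v_i^4+w_i^4))^d$, raises the Poly\'a multiplier exponent to $r^2$, and changes the perturbation in the hierarchy~(\ref{eq:sms.hierarchy}) from $\frac1r$ to $\frac{1}{r+1}$. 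So property (c) of Theorem~\ref{th:hierarchy} is not literally available, and I will instead verify a weaker ``eventual containment'' statement and take the running maximum.

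The key steps, in order, are as follows. First, I would record the translation between global positivity of a form $f$ on $\mathbb{R}^N$ and global positivity of the associated even form $\tilde f(v,w):=f(v_1^2-w_1^2,\ldots)$: a form $f$ is positive definite if and only if $\tilde f$ is positive definite as an even form in $2N$ variables (restricting to $v_i w_i = 0$ recovers all values of $f$, up to scaling by positive quantities), a fact the paper already uses via the shorthand $f(y^2-z^2)$. Second, I would apply Poly\'a's theorem (Theorem~\ref{th:polya}) to the even form obtained from $f_\gamma(z)-\tfrac{1}{r}(\sum z_i^2)^D$ minus a further small even perturbation: if $\gamma$ is a strict lower bound then $f_\gamma$ is pd by Theorem~\ref{th:slb}, hence $f_\gamma(z)-\tfrac{1}{r'}(\sum z_i^2)^D$ is pd for $r'$ large (continuity on the sphere), hence its even associate minus $\tfrac{1}{2r''}(\sum(v_i^4+w_i^4))^d$ is pd for $r''$ large, hence by Poly\'a there is an exponent $t$ with the product against $(\sum v_i^2+\sum w_i^2)^t$ having nonnegative coefficients; and increasing the Poly\'a exponent to any $s\ge t$ keeps nonnegative coefficients (multiplying a coefficient-nonnegative form by $\sum v_i^2+\sum w_i^2$ preserves coefficient-nonnegativity). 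Choosing $r=\max(r',r'',\lceil\sqrt t\rceil)$ and checking that all three monotonicities point the right way (smaller additive term, larger Poly\'a exponent $r^2\ge t$, and $\tfrac{1}{r}\le\tfrac{1}{r'}$) then shows $\gamma$ is feasible for the $r$-th program, i.e. $\gamma\le l_r\le m_r$. Third, for the upper bound $m_r\le p^*$: if $\gamma$ is feasible for level $r$, then the coefficient-nonnegative product is in particular a nonnegative even form, so its ``value'' version shows $f_\gamma(z)-\tfrac1r(\sum z_i^2)^D\ge 0$ on $\mathbb{R}^N$, hence $f_\gamma$ is pd, hence by Theorem~\ref{th:slb} $\gamma$ is a strict lower bound, so $\gamma<p^*$; taking sup over $\gamma$ and then max over $i\le r$ gives $m_r\le p^*$. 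Monotonicity of $\{m_r\}$ is immediate from the definition $m_r=\max_{i\le r}l_i$, and combined with the upper bound $m_r\le p^*$ and the fact that every strict lower bound $\gamma$ lies below some $l_r$ we get $\lim_r m_r=p^*$.

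The main obstacle is the bookkeeping in the second step: one has to be careful that the various perturbations are genuinely ``even'' forms of the correct degree so that Poly\'a's theorem applies verbatim, and that the substitution $z_i=v_i^2-w_i^2$ interacts correctly with the homogenization degree (the even form in $(v,w)$ has degree $2\cdot 2D=4D$, and $(\sum v_i^2+\sum w_i^2)^{r^2}$ must be the right companion). One also has to justify the ``increase the exponent'' monotonicity for coefficient-nonnegativity, and to handle the mismatch between the $\frac1r$ appearing in~(\ref{eq:sms.hierarchy}) and the $\frac{1}{2r}$ appearing inside~(\ref{eq:def.Knd}); none of this is conceptually hard, but it is where an error would most plausibly hide. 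Everything else is a direct transcription of the Theorem~\ref{th:hierarchy} template, with (c) replaced by the running-maximum device and (b) supplied by Poly\'a in place of Reznick/Artin.
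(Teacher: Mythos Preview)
Your overall strategy is right, and your treatment of the upper bound $m_r\le p^*$ is essentially correct (though note: the nonnegative even form you obtain only gives $f_\gamma(z)-\frac{1}{r}(\sum z_i^2)^D\ge -\frac{1}{2r}(\sum z_i^2)^D$ after substituting $v_i w_i=0$, not $\ge 0$; this still yields $f_\gamma(z)\ge\frac{1}{2r}(\sum z_i^2)^D$, hence $f_\gamma$ pd, which is what you need). The monotonicity of $\{m_r\}$ is of course free.

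The genuine gap is in your convergence argument. You want to decouple the three roles of $r$---choose $r'$ so that $f_\gamma-\frac{1}{r'}(\sum z_i^2)^D$ is pd, choose $r''$ for the additive $\frac{1}{2r''}(\sum(v_i^4+w_i^4))^D$ term, apply Poly\'a to get an exponent $t$, and then set $r=\max(r',r'',\lceil\sqrt t\rceil)$ claiming that ``all three monotonicities point the right way.'' But two of the three do \emph{not}. Coefficient nonnegativity is not monotone under pointwise increase of a form: replacing $\frac{1}{r'}$ by the smaller $\frac{1}{r}$ adds $(\frac{1}{r'}-\frac{1}{r})(\sum(v_i^2-w_i^2)^2)^D$, which has negative coefficients after expansion; and replacing $\frac{1}{2r''}$ by the smaller $\frac{1}{2r}$ subtracts a coefficient-nonnegative form. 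Either change can destroy the Poly\'a certificate, regardless of how large you subsequently make the exponent. (Also, your phrase ``its even associate minus $\frac{1}{2r''}(\sum(v_i^4+w_i^4))^d$ is pd'' has the sign backwards: the even associate of a pd form vanishes on $\{v=w\}$, so one must \emph{add} the perturbation to restore positive definiteness, which is exactly what the definition of $Pol^r_{n,2d}$ does.)

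What is actually needed---and this is the content of Lemma~\ref{lem:outstrip} in the paper---is a \emph{quantitative} Poly\'a bound \`a la Powers--Reznick (Theorem~\ref{th:reznick}): one shows that for the specific family $p_{\gamma,r}$ the minimum on the sphere is at least $c/r$ for some constant $c$ (via the inequality $(\sum v_i^4+w_i^4)^D\ge (2N)^{-D}(\sum v_i^2+w_i^2)^{2D}$), while the norm $\|q_{\gamma,r}\|$ stays bounded. Hence the required Poly\'a exponent $\bar N(r)$ grows at most linearly in $r$, and therefore $r^2\ge\bar N(r)$ eventually. This is how the paper synchronizes the three occurrences of $r$; a purely qualitative monotonicity argument of the kind you sketch cannot do it.
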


As before, we use bisection to obtain the optimal value $l_r$ of the $r^{th}$ level of the hierarchy up to a fixed precision $\epsilon$ (see Remark \ref{rem:bisection}). At each step of the bisection algorithm, one simply needs to multiply two polynomials together and check nonnegativity of the coefficients of the resulting polynomial to proceed to the next step. As a consequence, this hierarchy is optimization-free as we do not need to solve (convex) optimization problems at each step of the bisection algorithm. To the best of our knowledge, no other converging hierarchy of lower bounds for {\gh general POPs (whose feasible sets are contained within a ball of known radius)} dispenses altogether with the need to solve convex subprograms. We also provide a Positivstellensatz counterpart to the hierarchy given above (see Corollary \ref{cor:Positiv.LP}). This corollary implies in particular that one can always certify infeasibility of a basic semialgebraic set by recursively multiplying polynomials together and simply checking nonnegativity of the coefficients of the resulting polynomial.

We now make a few remarks regarding the techniques used in the proof of Theorem~\ref{th:sms.hierarchy}. Unlike Theorems \ref{th:reznick.hierarchy} and \ref{th:artin.hierarchy}, we do not show that $Pol_{n,d}^r$ satisfies properties (a)-(d) as given in Theorem~\ref{th:hierarchy} due to some technical difficulties. It turns out however that we can avoid showing properties (c) and (d) by using a result by Reznick and Powers \cite{PowersReznick} that we present below. Regarding properties (a) and (b), we show that a slightly modified version of (a) holds and that (b), which is the key property in Theorem \ref{th:hierarchy}, goes through as is. We note though that obtaining (b) from Poly\'a's result (Theorem \ref{th:polya}) is not as immediate as obtaining (b) from Artin's and Reznick's results. Indeed, unlike the theorems by Artin and Reznick (see Theorems \ref{th:artin} and \ref{th:reznick.uniform}) which certify global positivity of \emph{any} form, Poly\'a's result only certifies global positivity of \emph{even} forms. To make this latter result a statement about general forms, we work in an appropriate lifted space. {\gh We make the simple observation that any scalar $x$ can be written as $x=v^2-w^2$ with $vw=0$ (take $v=
	\sqrt{\max\{x,0\}}$ and $w=\sqrt{\max\{-x,0\}}$). We then replace the form} $p(z)$ in variables $z \in \mathbb{R}^n$ by the even form $p(v^2-w^2)$ in variables $(v,w) \in \mathbb{R}^{2n}$. This lifting operation preserves nonnegativity, but unfortunately it does not preserve positivity: even if $p(z)$ is pd, $p(v^2-w^2)$ always has zeros (e.g., when $v=w$). Hence, though we now have access to an even form, we still cannot use Poly\'a's property as $p(v^2-w^2)$ is not positive. This is what leads us to consider the slightly more complicated form $p(v^2-w^2)+\frac{1}{2r}(\sum_i v_i^4+w_i^4)^d$ in (\ref{eq:def.Knd}). 

\begin{theorem}[Powers and Reznick \cite{PowersReznick}]\label{th:reznick}
Let $\alpha=(\alpha_1,\ldots,\alpha_n) \in \mathbb{N}^n$, $x^\alpha=x_1^{\alpha_1}\ldots x_n^{\alpha_n}$, and write $|\alpha|=\alpha_1+\ldots+\alpha_n.$ Denote the standard simplex by $\Delta_n$, {\gh i.e., $\Delta_n=\{(x_1,\ldots,x_n) \in \mathbb{R}^n|~x_i\geq 0, i=1,\ldots,n, \text{ and } x_1+\ldots+x_n=1\}$}. Assume that $f$ is a form of degree $2d$ that is positive on $\Delta_n$ and let $$\lambda=\lambda(f)\mathrel{\mathop{:}}=\min_{x \in \Delta_n} f(x).$$
Define $c(\alpha)=\frac{(2d)!}{\alpha_1! \ldots \alpha_n!}.$ We have:
$$f(x)=\sum_{|\alpha|=2d} a_{\alpha} x^{\alpha}=\sum_{|\alpha|=2d} b_{\alpha}c(\alpha)x^{\alpha}.$$
Let \aaa{$||f||\mathrel{\mathop{:}}=\max_{|\alpha|=2d} |b_{\alpha}|$.}\footnote{As defined, $||f||$ is a submultiplicative norm; see \cite{schweighofer2004complexity}.}

Then, the coefficients of $$f(x_1,\ldots,x_n) \cdot (x_1+\ldots+x_n)^{\bar{N}}$$ are nonnegative for \aaa{$\bar{N}> d(2d-1)\frac{||f||}{\lambda}-2d$.}
\end{theorem}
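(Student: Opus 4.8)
The plan is to prove the statement directly, by showing that \emph{every} coefficient of the form $F:=f(x_1,\ldots,x_n)\cdot(x_1+\cdots+x_n)^{\bar N}$ is nonnegative whenever $\bar N> d(2d-1)\|f\|/\lambda-2d$. Write $f=\sum_{|\beta|=2d}a_\beta x^\beta$ and put $M:=\bar N+2d$, so $F$ has degree $M$. For $\gamma$ with $|\gamma|=M$, the coefficient of $x^\gamma$ in $F$ is $\sum_{\beta\le\gamma}a_\beta\binom{\bar N}{\gamma-\beta}$, where $\binom{\bar N}{\delta}$ denotes the multinomial coefficient $\bar N!/(\delta_1!\cdots\delta_n!)$. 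First I would record the elementary factorial rearrangement $\binom{\bar N}{\gamma-\beta}=\binom{M}{\gamma}\,\dfrac{\prod_i(\gamma_i)_{\beta_i}}{(M)_{2d}}$, where $(a)_k:=a(a-1)\cdots(a-k+1)$ is the falling factorial (this is the identity underlying the multivariate hypergeometric distribution). Since $\binom{M}{\gamma}/(M)_{2d}>0$, proving nonnegativity of the coefficient of $x^\gamma$ reduces to proving $g(\gamma):=\sum_{|\beta|=2d}a_\beta\prod_{i=1}^n(\gamma_i)_{\beta_i}\ \ge\ 0$ for every $\gamma\in\mathbb{Z}_{\ge 0}^n$ with $|\gamma|=M$.

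The second step compares $g(\gamma)$ with an honest evaluation of $f$ on the simplex. Because $\sum_\beta a_\beta\prod_i\gamma_i^{\beta_i}=M^{2d}f(\gamma/M)$ and $\gamma/M\in\Delta_n$, positivity of $f$ on $\Delta_n$ gives $\sum_\beta a_\beta\prod_i\gamma_i^{\beta_i}\ge M^{2d}\lambda$. Writing $g(\gamma)=M^{2d}f(\gamma/M)-R(\gamma)$ with $R(\gamma):=\sum_\beta a_\beta\bigl(\prod_i\gamma_i^{\beta_i}-\prod_i(\gamma_i)_{\beta_i}\bigr)$, it therefore suffices to show the estimate $|R(\gamma)|\le \binom{2d}{2}\|f\|\,M^{2d-1}=d(2d-1)\|f\|\,M^{2d-1}$; indeed this yields $g(\gamma)\ge M^{2d-1}\bigl(\lambda M-d(2d-1)\|f\|\bigr)$, which is $\ge 0$ as soon as $M> d(2d-1)\|f\|/\lambda$, i.e. $\bar N> d(2d-1)\|f\|/\lambda-2d$. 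Note that $f(\gamma/M)\ge\lambda$ is the only non-combinatorial input in the whole argument.

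The third step is the bound on $R(\gamma)$, and this is where the bookkeeping—and the main difficulty—resides. The scalar building block is $0\le a^m-(a)_m\le\binom{m}{2}a^{m-1}$ for integers $a,m\ge 0$, which follows from the telescoping identity $a^m-(a)_m=\sum_{l=1}^{m-1}l\,a^{m-l-1}(a)_l$ together with $(a)_l\le a^l$. Applying a coordinatewise telescoping to $\prod_i\gamma_i^{\beta_i}-\prod_i(\gamma_i)_{\beta_i}$ (using $(\gamma_i)_{\beta_i}\le\gamma_i^{\beta_i}$) gives $\prod_i\gamma_i^{\beta_i}-\prod_i(\gamma_i)_{\beta_i}\le\sum_j\binom{\beta_j}{2}\gamma_j^{\beta_j-1}\prod_{i\ne j}\gamma_i^{\beta_i}$. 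Combining this with $|a_\beta|\le\binom{2d}{\beta}\|f\|$ (the definition of $\|f\|$, since $a_\beta=\binom{2d}{\beta}b_\beta$), the identity $\binom{2d}{\beta}\binom{\beta_j}{2}=\binom{2d}{2}\binom{2d-2}{\beta-2e_j}$ (with $e_j$ the $j$-th standard basis vector), and then summing over $\beta$ via the multinomial theorem after the substitution $\beta'=\beta-2e_j$, collapses the whole expression to $\binom{2d}{2}\sum_j\gamma_j\cdot(\sum_i\gamma_i)^{2d-2}=\binom{2d}{2}M\cdot M^{2d-2}$, which is exactly $|R(\gamma)|\le\binom{2d}{2}\|f\|\,M^{2d-1}$. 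The hard part is purely organizational: every inequality used above is tight enough that the sharp constant $d(2d-1)=\binom{2d}{2}$ survives to the end, so care is needed not to lose a factor at any stage of the telescoping or the resummation.
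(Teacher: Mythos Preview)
Your proof is correct and complete. Note, however, that the paper does not actually give its own proof of this statement: Theorem~\ref{th:reznick} is quoted from Powers and Reznick~\cite{PowersReznick} and used as a black box (in Lemma~\ref{th:Reznick.even} and Lemma~\ref{lem:outstrip}). So there is no ``paper's own proof'' to compare against here.

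That said, the argument you wrote is precisely the one in the Powers--Reznick paper: rewrite the coefficient of $x^\gamma$ in $f\cdot(\sum_i x_i)^{\bar N}$ via the falling-factorial identity, reduce to the nonnegativity of $g(\gamma)=\sum_{|\beta|=2d}a_\beta\prod_i(\gamma_i)_{\beta_i}$, compare to $M^{2d}f(\gamma/M)\ge M^{2d}\lambda$, and bound the discrepancy by $\binom{2d}{2}\|f\|M^{2d-1}$ using the scalar estimate $a^m-(a)_m\le\binom{m}{2}a^{m-1}$ together with the multinomial resummation. All of your combinatorial identities check (in particular $\binom{2d}{\beta}\binom{\beta_j}{2}=\binom{2d}{2}\binom{2d-2}{\beta-2e_j}$ and the falling-factorial rewriting of $\binom{\bar N}{\gamma-\beta}$), and the extension of the sum defining $g(\gamma)$ from $\beta\le\gamma$ to all $|\beta|=2d$ is harmless since $(\gamma_i)_{\beta_i}=0$ whenever $\beta_i>\gamma_i$. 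The only cosmetic point is that terms with $\beta_j\in\{0,1\}$ in your telescoping bound should be read as zero (so that $\gamma_j^{\beta_j-1}$ is never actually evaluated at a negative exponent), which you implicitly handle when you substitute $\beta'=\beta-2e_j$.
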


Note that here the bound is given in the case where one considers the alternative (but equivalent) formulation of Poly\'a's Positivstellensatz to the one given in Theorem \ref{th:polya}, i.e., when one is concerned with positivity of a form over the simplex. The result can easily be adapted to the formulation where one considers global positivity of an even form as shown below.

\begin{lemma}\label{th:Reznick.even}
Let $p\mathrel{\mathop{:}}=p(x)$ be an even form of degree $2d$ that is positive definite and let $\beta>0$ be its minimum on $S_x$. \aaa{Define $q(x_1,\ldots,x_n)\mathrel{\mathop{:}}=p(\sqrt{x_1},\ldots,\sqrt{x_n})$.} Then, $$p(x_1,\ldots,x_n) \cdot (\sum_i x_i^2)^{\bar{N}}$$ has nonnegative coefficients for \aaa{$\bar{N}>d(2d-1)\frac{||q||}{\beta}-2d$.}
\end{lemma}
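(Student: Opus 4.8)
\textbf{Approach.} The plan is to reduce the statement about global positivity of the even form $p$ to the simplex-based statement of Theorem \ref{th:reznick} (Powers--Reznick), applied to the form $q(x_1,\ldots,x_n):=p(\sqrt{x_1},\ldots,\sqrt{x_n})$. The crucial observation is that since $p$ is even of degree $2d$, every monomial of $p$ has all exponents even, so substituting $x_i \mapsto x_i^2$ (equivalently, $\sqrt{x_i}$ formally) turns $p$ into a genuine polynomial $q$ of degree $d$ in the new variables. Concretely, $p(x_1,\ldots,x_n)=q(x_1^2,\ldots,x_n^2)$ as a polynomial identity. I would first make this substitution precise and note that $q$ is a \emph{form} of degree $d$.

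\textbf{Key steps, in order.} First, establish the identity $p(x)=q(x_1^2,\dots,x_n^2)$ and hence that $q$ is a form of degree $d$. Second, relate positivity: for any $u=(u_1,\dots,u_n)$ with $u_i\ge 0$, one has $q(u)=p(\sqrt{u_1},\dots,\sqrt{u_n})$, so $q$ is nonnegative on the nonnegative orthant, and in fact positive on $\Delta_n$ --- because if $u\in\Delta_n$ then $(\sqrt{u_1},\dots,\sqrt{u_n})$ is a nonzero point, and by homogeneity $p$ is positive on all nonzero points; one should track the scaling to see that $\min_{u\in\Delta_n} q(u)=\beta$ (the value $\beta=\min_{S_x} p$ transports to $\min_{\Delta_n} q$ since a point on $S_x$ has $\sum x_i^2=1$, which under $x_i^2=u_i$ becomes $\sum u_i = 1$, i.e. the simplex; so $\lambda(q)=\beta$). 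Third, apply Theorem \ref{th:reznick} to $q$ (which has degree $d$, so its ``$2d$'' is our $d$, i.e. the theorem is invoked with half-degree $d/2$ if $d$ is even, or more carefully, with the degree of $q$ playing the role of $2d$ in the statement): this yields that $q(x_1,\dots,x_n)\cdot(x_1+\dots+x_n)^{\bar N}$ has nonnegative coefficients for $\bar N$ past the stated threshold, with $\|q\|$ and $\lambda=\beta$ inserted. Fourth, substitute back $x_i\mapsto x_i^2$: this sends $q(x)\cdot(\sum_i x_i)^{\bar N}$ to $p(x)\cdot(\sum_i x_i^2)^{\bar N}$ and, since the substitution $x_i\mapsto x_i^2$ only permutes/merges monomials with nonnegative coefficient preservation, the resulting polynomial $p(x)\cdot(\sum_i x_i^2)^{\bar N}$ also has nonnegative coefficients.

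\textbf{Main obstacle.} The delicate point is bookkeeping the degree and the threshold: Theorem \ref{th:reznick} is stated for a form of degree ``$2d$'' and produces the bound $\bar N > d(2d-1)\frac{\|f\|}{\lambda}-2d$ in terms of that half-degree $d$. Here $q$ has degree $d$ (not $2d$), so one must carefully match the parameter: applying the theorem to $q$ with its degree $d$ playing the role of the symbol ``$2d$'' in Theorem \ref{th:reznick} gives the bound with half-degree $d/2$, which is exactly why the final statement reads $\bar N > d(2d-1)\frac{\|q\|}{\beta}-2d$ (here the leading $d$ and the $(2d-1)$ and $2d$ all refer to $p$'s degree $2d$, consistent with $q$'s degree being $d=2\cdot(d/2)$). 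I would double-check that $\|q\|$ in the sense of Theorem \ref{th:reznick} (the max of the normalized coefficients $b_\alpha$) is the quantity appearing in the lemma, and that the substitution $x_i\mapsto x_i^2$ does not change which coefficients can be negative --- it cannot, since it is a monomial substitution with positive coefficients. The rest is routine.
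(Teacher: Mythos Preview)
Your approach is correct and mirrors the paper's proof exactly: pass from $p$ to $q(x)=p(\sqrt{x})$, show $\lambda(q)\ge\beta$ via the bijection between $S_x$ and $\Delta_n$ given by $u_i=x_i^2$, apply Powers--Reznick (Theorem~\ref{th:reznick}) to $q$, and substitute $x_i\mapsto x_i^2$ to recover nonnegativity of the coefficients of $p(x)\cdot(\sum_i x_i^2)^{\bar N}$. One small correction to your bookkeeping discussion: applying Theorem~\ref{th:reznick} to $q$ (which has degree $d$) with its ``$2d$'' set to $d$ actually yields the tighter threshold $\tfrac{d}{2}(d-1)\tfrac{\|q\|}{\beta}-d$, not $d(2d-1)\tfrac{\|q\|}{\beta}-2d$; the lemma's stated bound is simply a (valid but looser) sufficient condition that dominates this, and the paper's proof invokes it in the same loose way without comment.
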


\begin{proof}
\aaa{Recall that $q(x_1,\ldots,x_n)=p(\sqrt{x_1},\ldots,\sqrt{x_n})$.} Since $p(x)\geq \beta$ on $S_x$, then \aaa{$q(x) \geq \beta$} on $\Delta_n.$ Indeed, by contradiction, suppose that there exists $\hat{x} \in \Delta_n$ such that \aaa{$q(\hat{x}) =\beta -\epsilon$} (where $\epsilon>0$) and let $y=\sqrt{\hat{x}}$. Note that as $\sum_i \hat{x}_i=1$, we have $\sum_i y_i^2=1$. Furthermore, \aaa{$p(y)=q(\hat{x})=\beta-\epsilon$} which contradicts the assumption. Hence, using Theorem~\ref{th:reznick}, we have that when \aaa{$\bar{N}>d(2d-1)\frac{||q||}{\beta}-2d$, $$q(x)(\sum_i x_i)^{\bar{N}} $$}
	has nonnegative coefficients. Hence, \aaa{
	$$q(y^2)(\sum_i y_i^2)^{\bar{N}} =p(y)(\sum_i y_i^2)^{\bar{N}}$$}
	also has nonnegative coefficients.
\end{proof}

Before we proceed with the proof of Theorem \ref{th:sms.hierarchy}, we need the following lemma.

\begin{lemma} \label{lem:outstrip}
	Let 	{\gh
	\begin{equation}\label{eq:def.p.gamma}
	p_{\gamma,r}(v,w)\mathrel{\mathop{:}}=f_\gamma(v^2-w^2)-\frac{1}{r}\left(\sum_{i=1}^N \left(v_i^2-w_i^2\right)^2\right)^D+\frac{1}{2r} \left(\sum_{i=1}^N (v_i^4+w_i^4)\right)^D,
	\end{equation}
}
	where $f_{\gamma}$ is defined as in (\ref{eq:f.gamma}), \aaa{let $q_{\gamma,r}(v,w)\mathrel{\mathop{:}}=p_{\gamma,r} (\sqrt{v}, \sqrt{w})$} and let \aaa{$$\bar{N}(r)=D(2D-1) \cdot \frac{||q_{\gamma,r}||}{\min_{S_{v,w}}p_{\gamma,r}(v,w)}-2D.$$} If $f_{\gamma}(z)$ is positive definite, there exists $\hat{r}$ such that $r^2 \geq \bar{N}(r)$,  for all $r \geq \hat{r}$.
	\end{lemma}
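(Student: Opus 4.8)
The plan is to show that $\bar N(r)$ grows only linearly in $r$ while the left-hand side $r^2$ grows quadratically, so the inequality $r^2 \ge \bar N(r)$ holds for all sufficiently large $r$. Concretely, I would analyze the three factors in
\[
\bar N(r) = D(2D-1)\cdot\frac{\|q_{\gamma,r}\|}{\min_{S_{v,w}} p_{\gamma,r}(v,w)} - 2D,
\]
namely the constant $D(2D-1)$, the numerator $\|q_{\gamma,r}\|$, and the denominator $\min_{S_{v,w}} p_{\gamma,r}$, as functions of $r$, and show the ratio is $O(r)$.

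\textbf{Step 1: Bound the numerator.} The polynomial $p_{\gamma,r}(v,w)$ is a sum of $f_\gamma(v^2-w^2)$ (which is independent of $r$) and two terms each carrying a coefficient $\tfrac1r$ or $\tfrac{1}{2r}$. Hence the coefficients of $p_{\gamma,r}$, and therefore of $q_{\gamma,r}(v,w)=p_{\gamma,r}(\sqrt v,\sqrt w)$ — which is obtained just by the substitution $v_i\mapsto\sqrt{v_i}$, $w_i\mapsto\sqrt{w_i}$ and hence only relabels monomials without changing coefficient magnitudes — are bounded above in absolute value by a constant independent of $r$ (for $r\ge 1$). Since $\|\cdot\|$ as defined in Theorem~\ref{th:reznick} is a fixed scalar multiple (by the reciprocals of the multinomial coefficients $c(\alpha)$, which again do not depend on $r$) of the max-coefficient norm, we get $\|q_{\gamma,r}\| \le C_1$ for all $r\ge 1$, where $C_1$ depends only on $f_\gamma$ and $D$.

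\textbf{Step 2: Bound the denominator from below.} Here is the crux. We must show $\min_{S_{v,w}} p_{\gamma,r}(v,w)$ does not decay to $0$ too fast — in fact I claim it is bounded \emph{below} by a positive constant independent of $r$ (which would even give $\bar N(r)=O(1)$, more than enough). On the unit sphere $S_{v,w}$ write $x_i = v_i^2-w_i^2$ and note $\sum_i(v_i^2-w_i^2)^2 \le \sum_i(v_i^2+w_i^2)^2 \le (\sum_i v_i^2+w_i^2)^2 = 1$, while $\sum_i(v_i^4+w_i^4) \ge \tfrac12(\sum_i v_i^2+w_i^2)^2\cdot(\text{something})$ — more carefully, one shows $\sum_i(v_i^4+w_i^4)\ge \tfrac{1}{n}$ on $S_{v,w}$ by power-mean / Cauchy–Schwarz, since $\sum(v_i^2+w_i^2)=1$ forces $\sum(v_i^4+w_i^4)\ge \tfrac1n (\sum(v_i^2+w_i^2))^2\cdot\tfrac12$-type bound. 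Then since $f_\gamma$ is a nonnegative form (it is sos), $f_\gamma(v^2-w^2)\ge 0$ everywhere, and
\[
p_{\gamma,r}(v,w) \ge 0 - \frac1r\cdot 1 + \frac{1}{2r}\Big(\sum_i v_i^4+w_i^4\Big)^D \ge -\frac1r + \frac{1}{2r}\,c_2,
\]
which is not obviously positive. So the naive bound is too weak, and I would instead split into two regions: (i) where $\sum_i(v_i^2-w_i^2)^2$ is small — there the positive term $\tfrac{1}{2r}(\sum v_i^4+w_i^4)^D$ dominates the negative $\tfrac1r(\sum(v_i^2-w_i^2)^2)^D$ once one checks that $\sum(v_i^4+w_i^4)^D$ always strictly exceeds $2(\sum(v_i^2-w_i^2)^2)^D$ on $S_{v,w}$ by a uniform margin (this is where $v=w$ being a zero of $f_\gamma(v^2-w^2)$ but \emph{not} of the corrected form gets used); and (ii) where $\sum_i(v_i^2-w_i^2)^2$ is bounded away from $0$ — there $f_\gamma(v^2-w^2)\ge \beta'>0$ by positive-definiteness of $f_\gamma$ (scaled appropriately; note the homogeneity degrees match up since $f_\gamma$ has degree $2D$ in $z$ and $z=v^2-w^2$ has degree $2$ in $(v,w)$, so $f_\gamma(v^2-w^2)$ has degree $4D$, same as the correction terms), and this beats the $-\tfrac1r$ term for $r$ large. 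Combining, $\min_{S_{v,w}}p_{\gamma,r} \ge c_3 > 0$ for all $r\ge\hat r_0$, with $c_3$ independent of $r$.

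\textbf{Step 3: Conclude.} From Steps 1 and 2, $\bar N(r) \le D(2D-1)\,C_1/c_3 - 2D =: C$, a constant, for all $r\ge \hat r_0$. Choosing $\hat r \ge \max\{\hat r_0, \lceil\sqrt C\rceil\}$ gives $r^2 \ge C \ge \bar N(r)$ for all $r\ge\hat r$, as desired. \textbf{The main obstacle} is Step 2: getting a lower bound on $\min_{S_{v,w}}p_{\gamma,r}$ that does not degrade with $r$, since the term $f_\gamma(v^2-w^2)$ genuinely vanishes on the diagonal $v=w$ (and more generally whenever $v^2-w^2=0$), so the argument must carefully exploit that the added term $\tfrac{1}{2r}(\sum v_i^4+w_i^4)^D$ is strictly positive exactly where $f_\gamma(v^2-w^2)$ fails to be, and quantify this trade-off uniformly over the sphere — a compactness argument on $S_{v,w}$ partitioned into the two regions above should deliver it, but the bookkeeping of the homogeneity degrees and the explicit constants is the delicate part.
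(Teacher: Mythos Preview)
Your overall plan (showing $\bar N(r)=O(r)$ so that $r^2\ge \bar N(r)$ eventually) is right, and Step~1 matches the paper's argument. The gap is in Step~2: your claim that $\min_{S_{v,w}}p_{\gamma,r}\ge c_3>0$ with $c_3$ \emph{independent of $r$} is simply false. Take any point on $S_{v,w}$ with $v=w$ (so $\sum_i v_i^2=\tfrac12$). Then $v_i^2-w_i^2=0$ for all $i$, hence $f_\gamma(v^2-w^2)=f_\gamma(0)=0$ and $(\sum_i(v_i^2-w_i^2)^2)^D=0$, so
\[
p_{\gamma,r}(v,w)=\frac{1}{2r}\Big(\sum_i(v_i^4+w_i^4)\Big)^D,
\]
which is positive but tends to $0$ as $r\to\infty$. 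So the minimum over the sphere really does decay like $1/r$, and your region-(i) analysis, done honestly, only yields a bound of order $1/r$ (both the positive and negative correction terms carry a $1/r$ prefactor), not a uniform constant. Your region-splitting can be repaired to give $\min_{S_{v,w}}p_{\gamma,r}\ge c/r$, which then feeds back into your original plan ($\bar N(r)=O(r)$) and closes the argument---but not as you wrote it.

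The paper's route to this $1/r$ lower bound is considerably cleaner than region-splitting and avoids the bookkeeping you flagged as the main obstacle: since $f_\gamma$ is positive definite, for all $r\ge r_0$ the form $f_\gamma(z)-\tfrac1r(\sum_i z_i^2)^D$ is itself positive definite, hence $f_\gamma(v^2-w^2)-\tfrac1r(\sum_i(v_i^2-w_i^2)^2)^D\ge 0$ everywhere. This disposes of the first two terms of $p_{\gamma,r}$ in one stroke, leaving only $\tfrac{1}{2r}(\sum_i(v_i^4+w_i^4))^D$, which on $S_{v,w}$ is bounded below by $\tfrac{1}{2r}(2N)^{-D}$ via the elementary inequality $\|x\|_2\le (2N)^{1/4}\|x\|_4$. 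This gives $\min_{S_{v,w}}p_{\gamma,r}\ge \tfrac{1}{2^{D+1}N^D r}$ directly, and hence $\bar N(r)\le D(2D-1)\,2^{D+1}N^D C_1\cdot r$, from which $\hat r$ is read off.
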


\begin{proof}
	As $f_{\gamma}(z)$ is positive definite, there exists a positive integer $r_0$ such that $f_{\gamma}(z)-\frac{1}{r}(\sum_i z_i^2)^D$ is positive definite for all $r \geq r_0$ and hence 
	\begin{equation}\label{eq:nonneg.proof}
	f_{\gamma}(v^2-w^2)-\frac{1}{r} (\sum_i (v_i^2-w_i^2)^2)^D
	\end{equation}
	 is nonnegative for all $r \geq r_0$. Recall now that $||x||_p=(\sum_{i=1}^n x_i^p)^{1/p}$ is a norm for $p \geq 1$ and that $$||x||_2 \leq n^{1/4}||x||_4.$$ This implies that $$(\sum_i v_i^4+\sum_i w_i^4)^D \geq \frac{1}{(2N)^{D}} (\sum_i v_i^2+\sum_i w_i^2)^{2D}$$ and hence in view of (\ref{eq:nonneg.proof}) and the definition of $p_{\gamma,r}$, we have $$p_{\gamma,r}(v,w) \geq \frac{1}{2^{D+1}N^{D}r} (\sum_i v_i^2+\sum_{i} w_i^2)^{2D}, \forall r\geq r_0.$$
	This enables us to conclude that 
	\begin{align}\label{eq:min.p.gamma}
	\min_{S_{v,w}} p_{\gamma,r}(v,w) \geq \frac{1}{2^{D+1}N^{D}r}, \text{ for any } r \geq r_0.
	\end{align}
\aaa{Further, notice that if we define $\bar{f}_\gamma(v,w):=f_\gamma(v-w)$, then using properties of the norm, we have the following chain of inequalities for any positive integer $r$:
\begin{align*}
 ||q_{\gamma,r}||&\leq ||\bar{f}_\gamma||+\frac{1}{r}||(\sum_i (v_i-w_i)^2)^D||+\frac{1}{2r} ||(\sum_i (v_i^2+w_i^2))^D||\\
&\leq ||\bar{f}_\gamma||+||(\sum_i (v_i-w_i)^2)^D||+ ||(\sum_i v_i^2+w_i^2)^D||=\mathrel{\mathop{:}}c_{\gamma}.
\end{align*}}
As a consequence, combining this with the definition of $\bar{N}(r)$ and (\ref{eq:min.p.gamma}), we have $$\bar{N}(r) \leq D(2D-1)2^{{D+1}}rN^{D}c_{\gamma}, ~\forall r \geq r_0.$$
Now taking $\hat{r}=\max(r_0,\lceil D(2D-1)2^{{D+1}}N^{D}c_{\gamma}\rceil)$, we have $r^2 \geq \bar{N}(r), \forall r\geq \hat{r}.$
	\end{proof}
We now proceed with the proof of Theorem \ref{th:sms.hierarchy}.

\begin{proof}[Proof of Theorem \ref{th:sms.hierarchy}]
	By definition, the sequence $\{m_r\}$ is nondecreasing. We show that it is upperbounded by $p^*$ by showing that if $\gamma$ is such that $$f_{\gamma}(z)-\frac{1}{r}(\sum_i z_i^2)^D \in Pol_{N,2D}^r,$$ for some $r$, then $f_{\gamma}$ must be positive definite. Then Theorem \ref{th:slb} gives us that $\gamma$ is a strict lower bound on (\ref{eq:POP}). As $p^* > \gamma$ for any such $\gamma$, we have that $l_r\leq p^*,\forall r$ and hence $m_r \leq p^*, \forall r.$ 
	
	Assume that $\gamma$ is such that
	$$f_{\gamma}(z)-\frac{1}{r}(\sum_i z_i^2)^D \in Pol_{N,2D}^r$$ for some $r$.
By definition of $Pol_{N,2D}^r$ and as $(\sum_i v_i^2+\sum_i w_i^2)^{r^2}$ is nonnegative, we get that the form
$$f_\gamma(v^2-w^2)-\frac{1}{r}(\sum_i (v_i^2-w_i^2)^2)^D+\frac{1}{2r} (\sum_i v_i^4+w_i^4)^D$$ is nonnegative.
This implies that 
	\begin{align}\label{eq:ineq.tv.screen}
f_{\gamma}(v^2-w^2)-\frac{1}{r}(\sum_i (v_i^2-w_i^2)^2)^D \geq -\frac{1}{2r} \text{ for } (v,w) \in \{(v,w)~|~ \sum_i v_i^4+\sum_i w_i^4 =1\},
	\end{align} which gives 
	\begin{align}\label{eq:ineq.sphere}
		f_{\gamma}(z)-\frac{1}{r}(\sum_i z_i^2)^D \geq -\frac{1}{2r},~ \forall z\in S_z.
	\end{align}
 Indeed, suppose that there exists $\hat{z} \in S_z$ such that (\ref{eq:ineq.sphere}) does not hold. Then, let $\hat{z}^+=\max(\hat{z},0)$ and $\hat{z}^-=\max(-\hat{z},0)$. Note that both $\hat{z}^+$ and $\hat{z}^-$ are nonnegative so we can take $\hat{v}=\sqrt{\hat{z}^+}$ and $\hat{w}=\sqrt{\hat{z}^-}.$ We further have that as $\hat{z} \in S_z$ and $\hat{z}=\hat{v}^2-\hat{w}^2$, $\sum_i \hat{v}_i^4+\sum_i \hat{w}_i^4=1$. Substituting $\hat{z}$ by $\hat{v}^2-\hat{w}^2$ in (\ref{eq:ineq.sphere}) then violates (\ref{eq:ineq.tv.screen}). Using (\ref{eq:ineq.sphere}), we conclude that $$f_{\gamma}(z)\geq \frac{1}{2r},~\forall z\in S_z$$ and that $f_{\gamma}$ is positive definite. 
 
 We now show that the hierarchy converges, i.e., that $\lim_{r \rightarrow \infty} m_r=p^*$. To do this, we show that if $\gamma$ is a strict lower bound on (\ref{eq:POP}), or equivalently from Theorem \ref{th:slb}, if $f_{\gamma}(z)$ is positive definite, then there exists $r'$ such that $$f_{\gamma}(z)-\frac{1}{r'} (\sum_{i}z_i^2)^D \in Pol_{N,2D}^{r'}.$$ 
 Since $f_{\gamma}$ is pd, there exists a positive integer $r_0$ such that $f_{\gamma}(z)-\frac{1}{r}(\sum_{i=1}^N z_i^2)^D$ is pd for any $r \geq r_0$. This implies that $f_{\gamma}(v^2-w^2)-\frac{1}{r}(\sum_i (v_i^2-w_i^2)^2)^D$ is nonnegative and $$f_\gamma(v^2-w^2)-\frac{1}{r}(\sum_i (v_i^2-w_i^2)^2)^D+\frac{1}{2r} (\sum_i(v_i^4+w_i^4))^D$$ is positive definite for $r \geq r_0$. {\gh Since this form is even, using Lemma \ref{th:Reznick.even} and the definition of $\bar{N}(r)$ in Lemma \ref{lem:outstrip}, for any $r\geq r_0$, we have that the polynomial $$\left(f_\gamma(v^2-w^2)-\frac{1}{r}(\sum_i (v_i^2-w_i^2)^2)^D+\frac{1}{2r} (\sum_i (v_i^4+w_i^4))^D \right) \cdot (\sum_i v_i^2+\sum_i w_i^2)^{\lceil \bar{N}(r)\rceil}$$ has nonnegative coefficients}. From Lemma \ref{lem:outstrip}, there exists $\hat{r}$ such that $r \geq \hat{r}$ implies $r^2 \geq \bar{N}(r).$ Taking $r'=\max\{r_0,\hat{r}\}$ and considering $p_{\gamma,r'}$ as defined in (\ref{eq:def.p.gamma}), we get that
 \begin{align*}
 &p_{\gamma,r'}(v,w) (\sum_i v_i^2+\sum_i w_i^2)^{r'^2} \\ &=p_{\gamma,r'}(v,w)(\sum_i v_i^2+\sum_i w_i^2)^{\lceil \bar{N}(r') \rceil} \cdot (\sum_i v_i^2+\sum_i w_i^2)^{r'^2-\lceil \bar{N}(r') \rceil} 
 \end{align*}
 has nonnegative coefficients, which is the desired result. This is because $$p_{\gamma,r'}(v,w)(\sum_i v_i^2+\sum_i w_i^2)^{\lceil \bar{N}(r') \rceil}$$ has nonnegative coefficients as $r' \geq r_0$, and $$(\sum_i v_i^2+\sum_i w_i^2)^{r'^2-\lceil \bar{N}(r') \rceil}$$ has nonnegative coefficients as $r' \geq \hat{r}$, and that the product of two polynomials with nonnegative coefficients has nonnegative coefficients.
\end{proof}

\begin{corollary}[An optimization-free Positivstellensatz]\label{cor:Positiv.LP} Consider the {\gh closed} basic semialgebraic set $$S\mathrel{\mathop{:}}=\{x \in \mathbb{R}^n~|~ g_i(x)\geq 0, i=1,\ldots,m\}$$ and a polynomial $p\mathrel{\mathop{:}}=p(x)$. Suppose that $S$ is contained within a ball of radius $R$. Let $\eta_i$ and $\beta$ be any finite upperbounds on $g_i(x)$ and, respectively, $-p(x)$ over the set $S$.\footnote{Once again, as discussed at the beginning of Section \ref{sec:nonneg.approx}, such bounds are very easily computable.} Let $d$ be such that $2d$ is the smallest even integer larger than or equal to the maximum degree of $p, g_i, i=1,\ldots,m$.  Then, $p(x)>0$ for all $x \in S$ if and only if there exists a positive integer $r$ such that	
	\begin{align*}
	\left(h(v^2-w^2)-\frac{1}{r}(\sum_{i=1}^{n+m+3} (v_i^2-w_i^2)^2)^d+\frac{1}{2r} (\sum_{i=1}^{n+m+3} (v_i^4+w_i^4))^d\right) \\
	\cdot \left(\sum_{i=1}^{n+m+3}v_i^2+\sum_{i=1}^{{n+m+3}} w_i^2\right)^{r^2}
	\end{align*}
	has nonnegative coefficients, where the form $h \mathrel{\mathop{:}}=h(z)$ in variables $$(z_1,\ldots,z_{n+m+3})\mathrel{\mathop{:}}=(x_1,\ldots,x_n,s_0,\ldots,s_{m+1},y)$$ is as follows:
	\begin{align*}
	h(x,s,y)\mathrel{\mathop{:}}=&\left(y^{2d}p(x/y)+s_0^2y^{2d-2} \right)^2+\sum_{i=1}^m\left(y^{2d}g_i(x/y)-s_i^2 y^{2d-2}\right)^2\\
	&+\left((R+\sum_{i=1}^m \eta_i +\beta)^dy^{2d}-(\sum_{i=1}^n x_i^2+\sum_{i=0}^m s_i^2)^d-s_{m+1}^{2d}\right)^2.
	\end{align*}
\end{corollary}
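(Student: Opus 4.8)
The plan is to obtain this corollary as the specialization to $\gamma=0$ of the results already established for the hierarchy~(\ref{eq:sms.hierarchy}), glued together by Theorem~\ref{th:slb}. The reduction is purely bookkeeping: $p(x)>0$ for all $x\in S$ is, by definition, the statement that $\gamma=0$ is a strict lower bound on the polynomial optimization problem~(\ref{eq:POP}) with objective $p$ and constraints $g_i\geq 0$; moreover the form obtained from $f_\gamma$ in~(\ref{eq:f.gamma}) by setting $\gamma=0$ is exactly the form $h$ in the statement, since the only $\gamma$-dependent term is the first square and $\left(-y^{2d}p(x/y)-s_0^2y^{2d-2}\right)^2=\left(y^{2d}p(x/y)+s_0^2y^{2d-2}\right)^2$. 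Writing $f_0$ for this form, $N:=n+m+3$, and $D:=2d$, Theorem~\ref{th:slb} tells us that it suffices to prove: $f_0$ is positive definite if and only if there is a positive integer $r$ for which the polynomial displayed in the statement has nonnegative coefficients. Comparing that polynomial with the definition~(\ref{eq:def.Knd}), the latter condition is precisely that $f_0(z)-\frac{1}{r}\left(\sum_{i=1}^N z_i^2\right)^D\in Pol_{N,2D}^r$ for some $r$.

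For the ``if'' direction I would reuse the upper-bound half of the proof of Theorem~\ref{th:sms.hierarchy}, now with $\gamma=0$. The displayed polynomial is even in $(v,w)$ (every $v_i$, $w_i$ occurs only through $v_i^2$ or $w_i^2$), so having nonnegative coefficients forces it to be nonnegative on $\mathbb{R}^{2N}$; dividing out the strictly positive factor $\left(\sum_i v_i^2+\sum_i w_i^2\right)^{r^2}$ then shows that the form $p_{\gamma,r}$ of~(\ref{eq:def.p.gamma}) with $\gamma=0$, call it $p_{0,r}$, is nonnegative. Restricting to $\{\sum_i v_i^4+\sum_i w_i^4=1\}$ yields $f_0(v^2-w^2)-\frac{1}{r}\left(\sum_i(v_i^2-w_i^2)^2\right)^D\geq-\frac{1}{2r}$ on that set, i.e. the analogue of~(\ref{eq:ineq.tv.screen}); then the substitution $v=\sqrt{z^+}$, $w=\sqrt{z^-}$ with $z^{\pm}=\max(\pm z,0)$ taken componentwise (so that $v^2-w^2=z$ ranges over $S_z$ while $\sum_i v_i^4+\sum_i w_i^4=\sum_i z_i^2$) turns this into the analogue of~(\ref{eq:ineq.sphere}), giving $f_0(z)\geq\frac{1}{2r}>0$ on $S_z$ and hence $f_0$ positive definite. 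Theorem~\ref{th:slb} then gives $p>0$ on $S$.

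For the ``only if'' direction I would replay the convergence half of the proof of Theorem~\ref{th:sms.hierarchy}, again frozen at $\gamma=0$. If $f_0$ is positive definite there is $r_0$ with $f_0(z)-\frac{1}{r}\left(\sum_i z_i^2\right)^D$ positive definite for all $r\geq r_0$; hence $f_0(v^2-w^2)-\frac{1}{r}\left(\sum_i(v_i^2-w_i^2)^2\right)^D$ is nonnegative and $p_{0,r}$ is positive definite. Since $p_{0,r}$ is an even form, Lemma~\ref{th:Reznick.even} shows that $p_{0,r}(v,w)\cdot\left(\sum_i v_i^2+\sum_i w_i^2\right)^{\lceil\bar{N}(r)\rceil}$ has nonnegative coefficients, and Lemma~\ref{lem:outstrip} (whose proof goes through verbatim at $\gamma=0$) supplies $\hat r$ with $r^2\geq\bar{N}(r)$ for all $r\geq\hat r$. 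Taking $r=\max\{r_0,\hat r\}$ and writing $\left(\sum_i v_i^2+\sum_i w_i^2\right)^{r^2}$ as the product of $\left(\sum_i v_i^2+\sum_i w_i^2\right)^{\lceil\bar{N}(r)\rceil}$ and $\left(\sum_i v_i^2+\sum_i w_i^2\right)^{r^2-\lceil\bar{N}(r)\rceil}$, both of which have nonnegative coefficients, and using that products of polynomials with nonnegative coefficients have nonnegative coefficients, gives that the displayed polynomial has nonnegative coefficients.

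Since every ingredient is already available, I do not expect a genuine obstacle. The one point to handle carefully is the notational identification $f_0=h$ together with the matching of degrees and exponents (that $D=2d$, the $r^2$ in the multiplier, and the role of $\bar{N}(r)$) so that the displayed expression is literally the membership statement $f_0(z)-\frac{1}{r}\left(\sum_i z_i^2\right)^D\in Pol_{N,2D}^r$ appearing in~(\ref{eq:sms.hierarchy}); with that identification in place, the corollary is a transcription of Theorems~\ref{th:slb} and~\ref{th:sms.hierarchy} at $\gamma=0$.
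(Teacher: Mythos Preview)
Your proposal is correct and follows exactly the route the paper takes: its proof consists of the single sentence that the corollary is an immediate consequence of the arguments in the proofs of Theorem~\ref{th:slb} and Theorem~\ref{th:sms.hierarchy} specialized to $\gamma=0$, and you have simply unpacked those arguments (including the identification $h=f_0$ and the two halves of the proof of Theorem~\ref{th:sms.hierarchy}) in full detail.
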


\begin{proof} 
	This is an immediate corollary of arguments given in the proof of Theorem~\ref{th:slb} and in the proof of Theorem \ref{th:sms.hierarchy} for the case where $\gamma=0.$
	\end{proof}

\subsection{Linear programming and second-order cone programming-based hierarchies for POPs}\label{subsec:lp.socp}

In this section, we present a linear programming and a second-order cone programming-based hierarchy for general POPs which by construction converge faster than the hierarchy presented in Section~\ref{subsec:opt.free}. These hierarchies are based on the recently-introduced concepts of dsos and sdsos polynomials \cite{iSOS_journal} which we briefly revisit below to keep the presentation self-contained.

\begin{definition}\label{def:dd.sdd}
	A symmetric matrix $M$ is said to be
	\begin{itemize}
		\item \emph{diagonally dominant (dd)} if $M_{ii} \geq \sum_{j \neq i}|M_{ij}|$ for all $i$. 
		\item  \emph{scaled diagonally dominant (sdd)} if there exists a diagonal matrix $D,$ with positive diagonal entries, such that {\gh$DMD$} is dd.
	\end{itemize} 
\end{definition}
We have the following implications as a consequence of Gershgorin's circle theorem:
\begin{align}\label{eq:implic.matrices}
\text{M } dd \Rightarrow \text{M } sdd \Rightarrow \text{M } \succeq 0.
\end{align}
Requiring $M$ to be dd (resp. sdd) can be encoded via a linear program (resp. a second-order cone program) (see \cite{iSOS_journal} for more details). These notions give rise to the concepts of dsos and sdsos polynomials.

\begin{definition}[\cite{iSOS_journal}]\label{def:dsos.sdsos}
	Let $z(x)=(x_1^d,x_1^{d-1}x_2,\ldots,x_n^d)^T$ be the vector of monomials in $(x_1,\ldots,x_n)$ of degree $d$. A form $p \in H_{n,2d}$ is said to be 
	\begin{itemize}
		\item \emph{diagonally-dominant-sum-of-squares (dsos)} if it admits a representation$$p(x)=z^T(x)Qz(x), \text{ where $Q$ is a dd matrix.}$$
		\item \emph{scaled-diagonally-dominant-sum-of-squares (sdsos)} if it admits a representation$$p(x)=z^T(x)Qz(x), \text{ where $Q$ is a sdd matrix.}$$
	\end{itemize}
\end{definition}

The following implications are a consequence of (\ref{eq:implic.matrices}):
\begin{align}\label{eq:implicsdsos}
p(x) \text{ dsos} \Rightarrow p(x) \text{ sdsos} \Rightarrow p(x) \text{ sos} \Rightarrow p(x) \text{ nonnegative}.
\end{align}
Given the fact that our Gram matrices and polynomials are related to each other via linear equalities, it should be clear that optimizing over the set of dsos (resp. sdsos) polynomials is an LP (resp. SOCP).

We now present our LP and SOCP-based hierarchies for POPs.

\begin{corollary}\label{cor:dsos.sdsos.hierarchy}
	
Recall the definition of $f_{\gamma}(z)$ as given in (\ref{eq:f.gamma}), with $z \in \mathbb{R}^N$ and $deg(f_{\gamma})=2D$, and let $p_{\gamma,r}$ be as in (\ref{eq:def.p.gamma}). Consider the hierarchy of optimization problems indexed by $r$:
	\begin{equation}\label{eq:dsos.sdsos.hierarchy}
	\begin{aligned}
	l_r\mathrel{\mathop{:}}=&\sup_{\gamma,q} &&\gamma\\
	&\text{s.t. } && p_{\gamma,r}(v,w) \cdot q(v,w) \text{ is s/dsos}\\
	& &&q(v,w) \text{ is s/dsos and of degree $2r^2$}.
	\end{aligned}
	\end{equation}
	Let $m_r=\max_{i=1,\ldots,r} l_i$. Then, $m_r \leq p^*$ for all $r$, $\{m_r\}$ is nondecreasing, and we have $\lim_{r \rightarrow \infty} m_r=p^*$.
\end{corollary}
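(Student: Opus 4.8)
The plan is to reduce Corollary \ref{cor:dsos.sdsos.hierarchy} to Theorem \ref{th:sms.hierarchy} by exploiting the chain of implications \eqref{eq:implicsdsos}, and then to use the same Powers--Reznick estimate (Theorem \ref{th:reznick}, via Lemma \ref{th:Reznick.even} and Lemma \ref{lem:outstrip}) that already did the heavy lifting in the optimization-free case. First I would observe that the objects appearing in \eqref{eq:dsos.sdsos.hierarchy} are compatible with those in \eqref{eq:sms.hierarchy}: the polynomial $p_{\gamma,r}(v,w)$ defined in \eqref{eq:def.p.gamma} is exactly $f_\gamma(v^2-w^2)-\frac1r(\sum_i(v_i^2-w_i^2)^2)^D+\frac1{2r}(\sum_i(v_i^4+w_i^4))^D$, and the condition ``$f_\gamma(z)-\frac1r(\sum_i z_i^2)^D \in Pol_{N,2D}^r$'' from Theorem \ref{th:sms.hierarchy} is precisely the statement that $p_{\gamma,r}(v,w)\cdot(\sum_i v_i^2+\sum_i w_i^2)^{r^2}$ has nonnegative coefficients. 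Since a form with nonnegative coefficients is dsos (it is a nonnegative combination of even monomials $x^{2\alpha}$, hence a diagonally dominant --- indeed diagonal --- Gram form), and since $(\sum_i v_i^2+\sum_i w_i^2)^{r^2}$ is itself a form of degree $2r^2$ with nonnegative coefficients and hence dsos, feasibility of $\gamma$ in \eqref{eq:sms.hierarchy} at level $r$ implies feasibility of $(\gamma,q)$ in \eqref{eq:dsos.sdsos.hierarchy} at level $r$ with $q(v,w)=(\sum_i v_i^2+\sum_i w_i^2)^{r^2}$, in both the dsos and sdsos versions (using dsos $\Rightarrow$ sdsos from \eqref{eq:implicsdsos}).

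Next I would prove the upper bound $m_r\le p^*$. Suppose $(\gamma,q)$ is feasible for \eqref{eq:dsos.sdsos.hierarchy}, so $p_{\gamma,r}(v,w)\cdot q(v,w)$ is s/dsos and hence, by \eqref{eq:implicsdsos}, nonnegative, with $q$ a nonzero s/dsos form (hence nonnegative and not identically zero). Wherever $q(v,w)>0$ we get $p_{\gamma,r}(v,w)\ge 0$; by continuity $p_{\gamma,r}\ge 0$ on the dense set where $q>0$, hence everywhere. From this point the argument is verbatim the one in the proof of Theorem \ref{th:sms.hierarchy}: nonnegativity of $p_{\gamma,r}$ gives, via the substitution $z=v^2-w^2$ on the set $\{\sum_i v_i^4+\sum_i w_i^4=1\}$ and the splitting $\hat z=\hat z^+-\hat z^-$, the inequality $f_\gamma(z)-\frac1r(\sum_i z_i^2)^D\ge -\frac1{2r}$ on $S_z$, so $f_\gamma(z)\ge\frac1{2r}>0$ on $S_z$ and $f_\gamma$ is positive definite; Theorem \ref{th:slb} then forces $\gamma$ to be a strict lower bound on \eqref{eq:POP}, so $l_r\le p^*$ and therefore $m_r=\max_{i\le r}l_i\le p^*$. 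Monotonicity of $\{m_r\}$ is immediate from the definition as a running maximum.

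For convergence, let $\gamma<p^*$ be any strict lower bound; by Theorem \ref{th:slb}, $f_\gamma$ is positive definite. I would then invoke exactly the chain used in the proof of Theorem \ref{th:sms.hierarchy}: there is $r_0$ with $f_\gamma(v^2-w^2)-\frac1r(\sum_i(v_i^2-w_i^2)^2)^D+\frac1{2r}(\sum_i(v_i^4+w_i^4))^D$ positive definite for all $r\ge r_0$; being an even form, Lemma \ref{th:Reznick.even} and the definition of $\bar N(r)$ in Lemma \ref{lem:outstrip} show $p_{\gamma,r}(v,w)\cdot(\sum_i v_i^2+\sum_i w_i^2)^{\lceil\bar N(r)\rceil}$ has nonnegative coefficients, and Lemma \ref{lem:outstrip} gives $\hat r$ with $r^2\ge\bar N(r)$ for $r\ge\hat r$. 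Taking $r'=\max\{r_0,\hat r\}$ and multiplying by the extra nonnegative-coefficient factor $(\sum_i v_i^2+\sum_i w_i^2)^{r'^2-\lceil\bar N(r')\rceil}$, we conclude $p_{\gamma,r'}(v,w)\cdot(\sum_i v_i^2+\sum_i w_i^2)^{r'^2}$ has nonnegative coefficients; choosing $q(v,w)=(\sum_i v_i^2+\sum_i w_i^2)^{r'^2}$, which is dsos hence sdsos, shows $(\gamma,q)$ is feasible for \eqref{eq:dsos.sdsos.hierarchy} at level $r'$ in both versions, so $l_{r'}\ge\gamma$ and $m_r\ge\gamma$ for all $r\ge r'$. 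Since $\gamma<p^*$ was arbitrary and $m_r\le p^*$, we get $\lim_r m_r=p^*$. The only point requiring a moment's care --- and the closest thing to an obstacle --- is the ``s/dsos $\Rightarrow$ nonnegative'' and ``nonnegative-coefficient form $\Rightarrow$ dsos'' bookkeeping, i.e. confirming that the diagonal Gram matrix in the monomial basis $z(x)=(x_1^{r^2}\!,\ldots)$ produced by a nonnegative-coefficient even form is genuinely diagonally dominant; everything else is a direct quotation of the proof of Theorem \ref{th:sms.hierarchy}. Since the LP/SOCP feasible sets sit between the nonnegative-coefficient cone and the nonnegativity cone, the hierarchy \eqref{eq:dsos.sdsos.hierarchy} converges and, level by level, its bounds dominate those of \eqref{eq:sms.hierarchy}.
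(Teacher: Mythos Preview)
Your proof is correct and follows essentially the same route as the paper's own argument: the key observation is that an even form with nonnegative coefficients has a diagonal (hence dd) Gram matrix and is therefore dsos, so feasibility in the Poly\'a hierarchy \eqref{eq:sms.hierarchy} with $q=(\sum_i v_i^2+\sum_i w_i^2)^{r^2}$ implies feasibility in \eqref{eq:dsos.sdsos.hierarchy}, after which everything reduces to Theorem~\ref{th:sms.hierarchy}. Your write-up simply unpacks in full the upper-bound and convergence steps that the paper compresses into the phrase ``the corollary then follows from Theorem~\ref{th:sms.hierarchy}.''
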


\begin{proof}
	This is an immediate consequence of the fact that any even form $p \in H_{n,2d}$ with nonnegative coefficients can be written as $p(x)=z(x)^TQz(x)$ where $Q$ is diagonal and has nonnegative (diagonal) entries. As such a $Q$ is  dd (and also sdd), we conclude that $p$ is dsos (and also sdsos). The corollary then follows from Theorem~\ref{th:sms.hierarchy}.
	\end{proof}

Note that similarly to our previous hierarchies, one must proceed by bisection on $\gamma$ to solve the level $r$ of the hierarchy. At each step of the hierarchy, we solve a linear program (resp. second-order cone program) that searches for the coefficients of $q$ that make $q$ dsos (resp. sdsos) and $p_{\gamma,r} \cdot q$ dsos (resp. sdsos).

There is a trade-off between the hierarchies developed in this subsection and the one developed in the previous subsection: the hierarchy of Section \ref{subsec:opt.free} is optimization-free whereas those of Section \ref{subsec:lp.socp} use linear or second-order cone programming. Hence the former hierarchy is faster to run at each step. However, the latter hierarchies could potentially take fewer levels to converge. This is similar to the trade-off observed between the hierarchies presented in Theorems \ref{th:reznick.hierarchy} and \ref{th:artin.hierarchy}.

\section{Future research directions and open problems}\label{sec:conclusion}
{\gh To conclude, we present some interesting directions for future research and some open problems.	

A first research direction of interest relates to the computational performance of the methods described in this paper. As they stand, we do not believe that our hierarchies are as practically efficient as other hierarchies (e.g., those due to Lasserre and Parrilo). This is mainly because all our hierarchies \aaa{require the use of bisection and increase  the number of variables (from $n$ to $n+m+3$) and the degree (from $2d$ to $4d$) of the polynomials involved}. Though this blow-up is linear, one can expect that it could be problematic in practice. Out of our hierarchies, the optimization-free one may be tempting to consider for very large-scale problems as each step of it only requires polynomial multiplication.
However, the caveat that one should keep in mind is that the level $r$ required to obtain a good-quality lower bound to the POP would probably be very large. To counter-balance this, one would have to invest considerable effort in a proper implementation of the arithmetic involved in each level of the hierarchy. Such an implementation should involve among other things: (i) automation of the computation of the coefficients of $f_{\gamma}$, (ii) exploiting the very specific structure of the polynomial in (\ref{eq:def.Knd}) (with $p$ replaced by $f_\gamma(z)-\frac{1}{r}(\sum_{i=1}^N z_i^2)^D $) to compute its coefficients explicitly, (iii) efficient multiplication of two multivariate polynomials (e.g., by evaluating the product on random samples and obtaining its coefficients by solving a linear system, or by pursuing the ideas in~\cite{johnson1974sparse}), and (iv) thinking about ways of parallelizing all these computations. We are unsure as to whether the optimization-free hierarchy would provide useful bounds on relevant examples---even after careful implementation---but it may be an interesting research direction to investigate.

Another interesting research direction lies in the comparison of our hierarchies to existing ones. One way to do this would be to determine the convergence rates of the hierarchies presented in this paper and compare them with those of other hierarchies. The main difficulty in obtaining such rates is the absence of a quantitative version of Theorem \ref{th:slb}, which would link the quality of the lower bound on the optimal value of the POP (i.e., the value of the gap $p^*-\gamma$) to the minimum of $f_{\gamma}$ on the unit sphere in $z$-space. If such a result were available, one could have hopes, e.g., to obtain convergence rates for our optimization-free hierarchy by applying Theorem \ref{th:reznick} and Lemma \ref{th:Reznick.even} to $f_{\gamma}$. (A similar approach was undertaken by de Klerk, Laurent, and Parrilo in \cite{de2006ptas} to provide convergence rates for an analogous hierarchy for the problem of minimizing a polynomial on the simplex.) Likewise, one could hope to relate Lasserre's hierarchy (for example) to the Reznick or Artin hierarchies presented in this paper and derive convergence rates for them by using previously known results on the complexity of the Lasserre hierarchy \cite{nie2007Putinarcomplexity}. As it stands, we are unable to show that if level $r$ of Lasserre's hierarchy certifies positivity of $p(x)-\gamma$ on the feasible set, then so does a level $r'$ of our Reznick or Artin hierarchies. This is due to the fact that, from a sum of squares certificate of positivity of $p(x)-\gamma$, one does not a priori know how to pick $r'$ in such a way that $f_{\gamma}(z)-\frac{1}{r'}(\sum_i z_i^2)^{2d}$ becomes positive definite (let alone admits a particular certificate of positivity). This question would again be answered if we had a quantitative version of Theorem \ref{th:slb}.
%
%
%
%
%
%
%
%
%

Finally, we present two more concrete open problems spawned by the writing of this paper. The first one concerns the assumptions needed to construct our hierarchies.}

\paragraph{Open problem 1} Theorems \ref{th:slb} and \ref{th:hierarchy} require that the feasible set $S$ of the POP given in (\ref{eq:POP}) be contained in a ball of radius $R$. Can these theorems be extended to the case where there is no compactness assumption on $S$?

\indent The second open problem is linked to the Artin and Reznick cones presented in Definition \ref{def:reznick.artin.cones}. 
\paragraph{Open problem 2} As mentioned before, Reznick cones $R_{n,2d}^r$ are convex for all $r$. We are unable to prove however that Artin cones $A_{n,2d}^r$ are convex (even though they satisfy properties (a)-(d) of Theorem \ref{th:hierarchy} like Reznick cones do). Are Artin cones convex for all $r$? We know that they are convex for $r=0$ and for $r$ large enough as they give respectively the sos and psd cones (see \cite{lombardi2014elementary} for the latter claim). However, we do not know the answer already for $r=1$. {\gh Our preliminary attempts to find forms that would disprove convexity of these cones have failed so far.}

\vspace{2mm}
\section*{Acknowledgments} We are grateful to Pablo Parrilo for very insightful comments, particularly as regards Section \ref{sec:opt.free.LP.SOCP} and the observation that any form can be made even by only doubling the number of variables and the degree. We would also like to thank Tin Nguyen for suggesting reference~\cite{johnson1974sparse}. Finally, we are very grateful to two anonymous referees and an anonymous associate editor for several constructive comments which have greatly improved this manuscript.

\bibliographystyle{siamplain}
\bibliography{pablo_amirali}
\end{document}